\setlist{itemsep=0pt, topsep=0pt}
\newtheorem{theorem}{Theorem}[section]
\newtheorem{corollary}[theorem]{Corollary}
\newtheorem{lemma}[theorem]{Lemma}
\newtheorem{claim}[theorem]{Claim}
\newtheorem{proposition}[theorem]{Proposition}
\newtheorem{observation}[theorem]{Observation}
\newtheorem{fact}[theorem]{Fact}
\newtheorem{question}[theorem]{Question}
\newtheorem{definition}[theorem]{Definition}
\newtheorem{remark}[theorem]{Remark}
\newcommand{\ep}{\epsilon}
\newcommand{\tbf}[1]{\textbf{#1}}
\newcommand{\set}[2]{\left\{#1\mathrel{}\middle|\mathrel{}#2\right\}}
\newcommand{\floor}[1]{\lfloor#1\rfloor}
\newcommand{\ceiling}[1]{\lceil#1\rceil}
\newcommand{\sqbs}[1]{\left[ #1 \right]}
\newcommand{\of}[1]{\left( #1 \right)}
\newcommand{\alf}{3.75}
\newcommand{\alff}{2.75}
\newcommand{\eps}{\varepsilon}
\title{New lower bounds on the size-Ramsey number of a path} 
\author{Deepak Bal$^{1}$ \and Louis DeBiasio$^{2}$}
\date{\today}
\begin{document}

\maketitle
\noindent\footnotetext[1]{Department of Mathematics, Montclair State University {\tt deepak.bal@montclair.edu}. }
\noindent\footnotetext[2]{Department of Mathematics, Miami University {\tt debiasld@miamioh.edu}. 
}

\begin{abstract}
We prove that for all graphs with at most $(3.75-o(1))n$ edges there exists a 2-coloring of the edges such that every monochromatic path has order less than $n$.  This was previously known to be true for graphs with at most $2.5n-7.5$ edges. We also improve on the best-known lower bounds in the $r$-color case.
\end{abstract}

\section{Introduction}
Given a graph $H$, let $\hat{R}_r(H)$ be the minimum $m$ such that there exists a graph $G$ with $m$ edges such that in every $r$-coloring of $G$, there is a monochromatic copy of $H$.  When $r=2$, we drop the subscript and just write $\hat{R}(H)$.  We refer to $\hat{R}(H)$ as the \emph{size-Ramsey} number of $H$.

Let $P_n$ be the path with $n$ vertices. Erd\H{o}s \cite{Er} famously asked if $\hat{R}(P_n) / n \to \infty$ and $\hat{R}(P_n) / n^2 \to 0$. Beck \cite{B2} proved that, in fact, $\hat{R}(P_n) \le 900n$ (for $n$ sufficiently large). The bound $900n$ was subsequently improved in \cite{B}, \cite{Bol}, \cite{DP1}, \cite{Let} and currently rests at $74n$ as proved by Dudek and Pra{\l}at in \cite{DP2}. 

As for the lower bound, it is clear that $\hat{R}(P_n)> 2n-4$ since $P_n$ has $n-1$ edges.  Beck \cite{B2} proved $\hat{R}(P_n)\geq (9/4-o(1))n$, Bielak \cite{Bie} proved $\hat{R}(P_n)\geq 9n/4-3$,  Bollob\'as \cite{B} proved $\hat{R}(P_n)\geq (1+\sqrt{2}-o(1))n$, and finally Dudek and Pra{\l}at \cite{DP2} proved $\hat{R}(P_n)\geq 5n/2-15/2$.

The closest thing there is to a conjecture about the precise value of $\hat{R}(P_n)$ is Bollob\'as' \cite{B} comment, ``it would not be surprising if $\hat{R}(P_n)$ turned out to be about $8n$.'' It is not known what insight led to this comment, but together with the recent flurry of activity on the upper bound, it inspired us to make a determined effort to improve the lower bound.  We prove the following.

\begin{theorem}\label{thm:main-2-col}
For all $\ep>0$, there exists $n_0$ such that if $n\geq n_0$ and $G$ is a graph with at most $(\alf-\ep) n$ edges, there exists a 2-coloring of the edges of $G$ such that every monochromatic path has order less than $n$.  Thus $\hat{R}(P_n) \ge (\alf - o(1))n$.
\end{theorem}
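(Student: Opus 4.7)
The plan is to prove the theorem by constructing, for any graph $G$ with at most $(\alf-\eps)n$ edges, an explicit 2-coloring of $E(G)$ that avoids monochromatic $P_n$. The strategy combines a structural reduction, a partly randomized / partly deterministic coloring, and an Erd\H{o}s--Gallai-style analysis of each color class.

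I would first reduce to a ``kernel'' $G'$ of $G$ by iteratively deleting vertices of degree at most $1$; such vertices can only occur as endpoints of paths, so the edges incident to them may be colored last without creating new monochromatic $P_n$. This yields a graph $G'$ with minimum degree at least $2$, still satisfying $e(G') \le (\alf-\eps)n$ and, by handshaking, having at most $(\alf-\eps)n$ vertices. Since the average degree of $G'$ is at most $15/2-2\eps$, a positive density of vertices are ``low-degree,'' which I will exploit in the coloring.

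On $G'$ I would carry out a two-phase coloring. In the first phase, identify the ``high-degree'' vertices (those of degree above a threshold $D$ to be tuned), which are relatively few by the average-degree bound, and color their incident edges deterministically to balance the two colors locally (e.g., via an equitable edge-decomposition or matching argument). In the second phase, color the remaining edges uniformly at random and apply Chernoff-type concentration, combined with a union bound over the possible structures of a monochromatic $P_n$. To certify that no color class contains $P_n$, the aim is to verify an Erd\H{o}s--Gallai-type bound: on every subset of $k$ vertices, each color class contains at most $(n-2)k/2$ edges. If each color class has this ``uniformly sparse'' property, then by the Erd\H{o}s--Gallai theorem neither contains $P_n$.

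The main obstacle is to make the constant $\alf$ tight, not merely to obtain some improvement over $5/2$. The naive random coloring would leave roughly $1.875n$ edges per color, so Erd\H{o}s--Gallai alone is insufficient whenever a moderately dense subset of size close to $n$ appears. I expect the crucial new ingredient to be a structural dichotomy: either $G'$ contains an ``expander-like'' dense subgraph, which one handles with a tailored deterministic coloring exploiting the expansion to break long monochromatic components, or $G'$ is locally sparse everywhere, in which case a random coloring suffices. Balancing these two regimes, and carefully choosing the threshold $D$ together with the scale of the dense-subgraph window, is where I expect the optimization to force exactly the constant $\alf = 15/4$.
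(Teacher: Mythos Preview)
Your proposal has a fatal gap: the Erd\H{o}s--Gallai theorem runs in the wrong direction for your certification step. Erd\H{o}s--Gallai says that a graph on $k$ vertices with \emph{no} $P_n$ has at most $(n-2)k/2$ edges; its contrapositive forces a $P_n$ when there are \emph{many} edges, not when there are few. The ``uniformly sparse'' condition you propose---at most $(n-2)k/2$ edges on every $k$-vertex subset---is satisfied by $P_n$ itself, which has only $n-1 < (n-2)n/2$ edges on its own vertex set. So verifying local sparsity of a color class tells you nothing about whether it contains a long path, and the entire second phase collapses. More broadly, each color class under a random split has about $1.875n$ edges, while a path needs only $n-1$; no edge-counting argument can exclude $P_n$ without much stronger structural control, and the union bound over all potential copies of $P_n$ is over far too many events to be useful.

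The paper's argument is of a completely different nature and does not attempt to make either color class sparse. It instead seeks a \emph{vertex} partition $V_1,\dots,V_{k+1}$ (with $k\in\{1,2\}$) in which every part has at most $n-1$ vertices and at most $n-2$ edges cross the partition; coloring edges inside parts blue and crossing edges red then immediately avoids monochromatic $P_n$ (blue components are too small; red has too few edges). Carrying this out requires (i) a reduction to minimum degree at least $3$, not $2$---this is a nontrivial star-arboricity argument and is what forces $|V(G)|<3(n-2)$, which your degree-$2$ kernel does not give; (ii) a bisection-width-style lemma applied to $G$ minus its few high-degree vertices; and (iii) a separate, delicate treatment of the regime $|V(G)|\approx 5n/3$, which is exactly where the constant $3.75$ is determined. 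None of these ingredients appear in your outline, and the vague ``expander-like vs.\ locally sparse'' dichotomy does not substitute for them.
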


For the general, $r$-color version of the problem, the best upper bound is due to Krivelevich \cite{K} who proved $\hat{R}_r(P_n)=O(r^2\log (r) n)$ (Dudek and Pra{\l}at \cite{DP3} later gave a different proof).  In fact, both \cite{K} and \cite{DP3} prove the stronger ``density version'' of the theorem: there exists a graph $G$ (a binomial random graph) with $|E(G)| = O(r^2\log (r)n)$ such that every subgraph of $G$ with at least $e(G)/r$ many edges contains a monochromatic path of order $n$ (A recent paper of Balogh, Dudek, and Li \cite{BDL} shows that the factor $r^2\log r$ cannot be improved for this stronger density version in the setting of random graphs).
 
As for the lower bound, Dudek and Pra{\l}at \cite{DP2}  proved that for any $r\ge 2$, $\hat{R}_r(P_n) \ge \frac{(r+3)r}{4}n - O(r^2)$ and then Krivelevich \cite{K} proved that for any $r\ge 3$ such that $r-2$ is a prime power, $\hat{R}_r(P_n) \ge (r-2)^2n - o(n)$. We improve on each of these results by proving the following.

\begin{theorem}\label{thm:main-r-col}
Let $r \ge 2$ and let $q$ be the largest prime power such that $q\leq r-1$.  Then $$\hat{R}_r(P_n)\ge \max\left\{ \of{\frac{(r-1)r}{2} + \alff-o(1)}n, (q^2-o(1))n\right\}.$$ 
\end{theorem}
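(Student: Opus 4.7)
The theorem packages two independent lower bounds via $\max$; we establish each separately.

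\emph{The inductive bound $\bigl(\binom{r}{2}+\alff-o(1)\bigr)n$.} We plan to proceed by induction on $r$, with Theorem~\ref{thm:main-2-col} as the base case $r=2$. For the inductive step, given a graph $G$ with $|E(G)|\le\bigl(\binom{r}{2}+\alff-\ep\bigr)n$, we aim to locate a subgraph $F\subseteq G$ which is $P_n$-free and satisfies $|E(F)|\ge (r-1)n-o(n)$; we then assign color $r$ to $F$ and apply the induction hypothesis to $G\setminus F$, which has at most $\bigl(\binom{r-1}{2}+\alff-\ep/2\bigr)n$ edges. The telescoping sum $3.75+2+3+\dots+(r-1)=\binom{r}{2}+2.75$ gives exactly the target bound. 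The existence of such an $F$ reduces to a peeling lemma: any graph with at least $(r-1)n$ edges admits a $P_n$-free subgraph retaining all but $o(n)$ of them. I would prove this by repeatedly locating a copy of $P_n$ and snipping a single edge near its midpoint, bounding the number of iterations by $O(r)$ via an Erd\H os--Gallai-type count on the number of edge-disjoint long paths.

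\emph{The affine-plane bound $(q^2-o(1))n$.} Here $q\le r-1$ is the largest prime power, and we use the affine plane $AG(2,q)$: $q^2$ points partitioned into $q+1$ parallel classes of $q$ pairwise disjoint lines, each line containing $q$ points. Given $G$ with $|E(G)|\le(q^2-\ep)n$, we first reduce to a core $G^\ast$ with $|V(G^\ast)|\le q(n-1)$ by iteratively peeling off vertices of degree at most $2q$; if $G^\ast$ is nonempty, its minimum degree $\ge 2q+1$ forces $|V(G^\ast)|\le 2|E(G^\ast)|/(2q+1)\le q(n-1)$ for $n$ sufficiently large. We then partition $V(G^\ast)$ into $q^2$ blocks $\{V_p\}_{p\in AG(2,q)}$, each of size at most $\lfloor(n-1)/q\rfloor$, and color each edge $uv$ of $G^\ast$ with $u\in V_p,\ v\in V_{p'}$ by the parallel class of the line $pp'$ (intra-block edges get any incident class). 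Each monochromatic subgraph lies in a disjoint union of ``blobs'' $\bigcup_{p\in\ell}V_p$ of order $\le q\cdot(n-1)/q=n-1$, precluding a monochromatic $P_n$. The peeled shell is $2q$-degenerate, so we extend the coloring by processing shell vertices in reverse peeling order, selecting for each incident edge a color among $\{1,\dots,q+1\}$ that does not extend its current monochromatic blob to $n$ vertices (when $q<r-1$ the unused colors $\{q+2,\dots,r\}$ provide additional slack).

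The main obstacles are as follows. In Part~1, the tightness of the $o(n)$ slack forces the peeling lemma to be quantitatively efficient, which likely requires separating the dense and sparse regimes of $G$: extracting $F$ as a disjoint union of nearly-$K_{n-1}$ pieces works when $G$ contains a dense subgraph (thanks to a minimum-degree argument), while a greedy path-piece selection suffices in the sparse regime. In Part~2, the critical step is the shell extension: one must guarantee that adding a pendant edge through a shell vertex never completes a monochromatic $P_n$. When $q<r-1$ there is at least one unused color to absorb problematic shell edges, but when $q=r-1$ the extension must be carried out entirely within the $q+1$ plane colors, requiring the peeling threshold $2q$, the block sizes, and the reverse-order extension to be simultaneously calibrated.
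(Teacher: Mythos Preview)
Both halves of your plan have the right shape, but each contains a real gap.

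\textbf{Part 1.} The snipping step is the problem. Deleting the middle edge of a $P_n$ in a general graph does \emph{not} disconnect that path or even shorten the longest path: in $K_n$ you can remove $\Theta(n)$ edges and still have a Hamiltonian path, so the ``$O(r)$ iterations via an Erd\H{o}s--Gallai count'' cannot be justified. What the paper does instead is split on the vertex count $N$. If $N\le(r-1)(n-2)$, the explicit construction of Proposition~\ref{3n/2_r} colors $G$ directly and no $F$ is needed. If $N>(r-1)(n-2)$, one takes a spanning tree $T$ (which already has $N-1\ge(r-1)(n-2)$ edges) and snips \emph{in the tree}: there the middle edge of a $P_n$ really does separate $T$ into two subtrees of order $\ge\lfloor n/2\rfloor$, so at most $\lfloor N/\lfloor n/2\rfloor\rfloor-1=O(r)$ cuts suffice (Lemma~\ref{snip}). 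The resulting forest is your $F$, and your telescoping induction then goes through verbatim. Your ``dense/sparse'' case split is heading in the right direction but is not the efficient one; the split on $N$ together with the spanning-tree trick is what makes the $o(n)$ loss genuinely $O(r^2)$.

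\textbf{Part 2.} Your core coloring is fine, but the shell extension breaks. A shell vertex $v$ carries up to $2q$ back-edges and you have only $q+1$ colors, so for every $q\ge2$ some color must repeat at $v$; then $v$ is an interior vertex in that color, and a monochromatic path can pass straight through $v$, concatenating two blobs each of order close to $n-1$. Your rule ``don't extend a blob to $n$ vertices'' does not prevent this concatenation, and you cannot lower the peeling threshold to $q+1$ without destroying the bound $|V(G^\ast)|\le q(n-1)$ that your block sizes rely on. The paper abandons the vertex-count route entirely. It sets aside the high-degree vertices $V_0$ (only $O(n^{0.9})$ of them), randomly assigns each remaining vertex to one of the $q^2$ affine points, and uses McDiarmid's inequality to show that with positive probability every line $L$ spans at most $|E(G)|/q^2+o(n)<n-\Omega(n^{0.9})$ \emph{edges} (not vertices). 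For $V_0$ the key device is Proposition~\ref{affprop}(ii): the $q+1$ lines through a fixed point cover all $q^2$ points, so one can route every color-$i$ edge out of $V_0$ into the single line $L_i$ of the $i$th parallel class through that point. Any monochromatic path then lives in $V_0\cup\bigcup_{x\in L}V_x$ for one line $L$ and hence has at most $e\bigl(\bigcup_{x\in L}V_x\bigr)+2|V_0|<n$ vertices. This edge-count argument, not a degeneracy extension, is what allows all $q+1$ colors to be used without wasting one on the exceptional set.
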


Note that the prime number theorem guarantees that for any $\eps>0$ and $r$ sufficiently large, there is a prime between $(1-\eps)r$ and $r$, so for sufficiently large $r$, the second term in the maximum will dominate and we have $\hat{R}_r(P_n) \ge (r-1 - o_r(1))^2 n$. Determining whether $\hat{R}_r(P_n)=\Theta(r^2)n$ or not is perhaps the most interesting open problem regarding the size-Ramsey number of a path.

\subsection{Outline, Notation} 

Our improvement in the lower bound stems from two main ideas.   

1) If we can partition the graph $G$ into sets of order at most $n-1$ such that the number of edges crossing the partition is at most $n-2$, then we can color the edges inside the sets red and the edges between the sets blue so there are no monochromatic $P_n$'s.  This has some similarity to the problem of determining the bisection width of a graph\footnote{the \emph{bisection width} of a graph is the minimum number of edges crossing a balanced bipartition of the vertex set} -- in which case a result of Alon \cite[Proposition 3.1]{A} gives good bounds on the number of crossing edges in a balanced bipartition of graphs with bounded maximum degree and at most $2n-2$ vertices.  However, in our case, $G$ may not have bounded maximum degree, $G$ may have more than $2n-2$ vertices, and we don't necessarily want the partition to be balanced.  Nevertheless, with some extra work, we are able to use similar methods from the study of the bisection width problem (e.g.\ \cite{A}, \cite{KM}) in our setting. 

2) From the ordinary path Ramsey problem it is known that if $G$ has at most $\frac{3n}{2}-2$ vertices, then there exists a 2-coloring of $G$ such that every monochromatic path has order less than $n$.  We show that if $G$ has between roughly $3n/2$ and $5n/3$ vertices and few enough edges, then there exists a 2-coloring of $G$ such that every monochromatic path has order less than $n$.  This allows us to only consider graphs with at least $5n/3$ vertices.

In Section \ref{sec:lems} we prove a number of lemmas which we will use throughout the proof. We also show how some of these lemmas imply the previously known lower bounds on the size-Ramsey number of paths.
In Section \ref{sec:2-col} we prove Theorem \ref{thm:main-2-col}. 
In Section \ref{sec:r-col} we prove Theorem \ref{thm:main-r-col}. In Section \ref{sec:concl}, we list a few observations and approaches that may helpful in trying to improve the lower bounds we have provided.

Let $G=(V,E)$ be a graph.  For all $S\subseteq V$, we write $G-S$ to mean $G[V \setminus S]$; i.e.\ the graph induced by $V\setminus S$.  Given disjoint sets $A, B\subseteq V$, we write $[A,B]$ to mean the bipartite subgraph of $G$ with vertex set $A\cup B$ and edges with one endpoint in $A$ and the other in $B$.  We sometimes write $|G|$ to mean $|V|$.  For any other notation we defer to \cite{Die}. 
All logarithms are natural (base $e$) unless otherwise stated.  Throughout the paper, if we refer to an \emph{$r$-coloring} of $G$, we mean an $r$-coloring of the edges of $G$.

\section{Lemmas}\label{sec:lems}

When proving a lower bound on the $r$-color size-Ramsey number of $P_n$, we are given a graph $G=(V,E)$ and we must exhibit an $r$-coloring of the edges of $G$ so that $G$ has no monochromatic paths of order $n$.  It is often useful to break this into cases depending the number of vertices of $G$.  In Section \ref{sec:Nlower} we use the examples from the ordinary path Ramsey problem to determine a lower bound on $|V|$.  In Section \ref{sec:Nupper} we prove a general result which allows us, when proving a lower bound on $\hat{R}_r(P_n)$, to restrict our attention to graphs with minimum degree at least $r+1$, which in turn gives us an upper bound on $|V|$.  In Section \ref{sec:prune}, we prove a lemma which we use in the proof of Theorem \ref{thm:main-r-col}. In Section \ref{sec:mainlem}, we prove the main lemma of the paper needed for the proof of Theorem \ref{thm:main-2-col}.  Finally, in Section \ref{extend} we show how to deal with the case when $G$ has between roughly $3n/2$ and $5n/3$ vertices.  

\subsection{Examples from the ordinary path Ramsey problem}\label{sec:Nlower}

\begin{proposition}[Gerencs\'er, \ Gy\'arf\'as \cite{GG}]\label{3n/2}
If $G$ has at most $\frac{3n}{2}-2$ vertices, then there exists a 2-coloring of $G$ such that every monochromatic path has order less than $n$.
\end{proposition}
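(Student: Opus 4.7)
The plan is to produce the required 2-coloring by a simple bipartition of $V(G)$, mimicking the classical construction underlying the ordinary Ramsey number $R(P_n,P_n)=\lfloor 3n/2\rfloor -1$. Writing $N=|V(G)|\le\tfrac{3n}{2}-2$, I would split $V(G)=A\sqcup B$ with $|A|=\min(N,\,n-1)$ and $B=V(G)\sm A$. The hypothesis on $N$ immediately gives $|A|\le n-1$ and $|B|\le \lfloor n/2\rfloor -1$. Then colour every edge of $G[A]\cup G[B]$ red, and every edge between $A$ and $B$ blue.

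The verification is routine. Each red component sits inside $G[A]$ or inside $G[B]$, so every red path has at most $\max(|A|,|B|)\le n-1$ vertices. The blue graph is bipartite with sides $A$ and $B$, so every blue path alternates between the two parts and therefore uses at most $2|B|+1\le n-1$ vertices. Hence no monochromatic path attains order $n$.

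I do not anticipate any genuine obstacle: the whole argument is a one-paragraph textbook construction. The only bookkeeping point is a minor parity check when $n$ is odd, in which case $N\le (3n-5)/2$ forces $|B|\le (n-3)/2$, which still gives $2|B|+1\le n-2<n$, so the same coloring works.
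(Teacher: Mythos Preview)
Your argument is correct and is essentially the same as the paper's: both take the identical bipartition into a large part of size at most $n-1$ and a small part of size at most $\tfrac{n}{2}-1$, and bound one color class by component size and the other by alternation with the small side. The only cosmetic difference is that the paper colours all edges touching the small set with one colour and the edges inside the large set with the other (so in their scheme the edges inside the small set receive the ``alternating'' colour rather than the ``component'' colour), but this does not affect the bounds and the proofs are interchangeable.
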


\begin{proof}
Partition $V(G)$ into two sets $X_1, X_2$ with $|X_1|\leq \frac{n}{2}-1$ and $|X_2|\leq n-1$.  Color all edges incident with $X_1$ red and all edges inside $X_2$ blue.  Clearly the longest blue path has order $n-1$. Any pair of consecutive vertices on a red path must contain at least one vertex of $X_1$. Thus the longest red path is of order at most $2|X_1|+1\leq n-1$.
\end{proof}

\begin{proposition}[Yongqi, Yuansheng, Feng, Bingxi \cite{YYFB}]\label{3n/2_r}
Let $r\geq 3$. If $G$ has at most $2(r-1)(\frac{n}{2}-1)=(r-1)(n-2)$ vertices, then there exists an $r$-coloring of $G$ such that every monochromatic path has order less than $n$.
\end{proposition}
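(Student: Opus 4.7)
My plan is to generalize the construction in the proof of Proposition~\ref{3n/2}. Since $|V(G)| \le 2(r-1)(\lfloor n/2\rfloor - 1)$, we can partition $V(G)$ into $2(r-1)$ (possibly empty) sets $X_1, Y_1, X_2, Y_2, \ldots, X_{r-1}, Y_{r-1}$, each of size at most $\lfloor n/2\rfloor - 1$. For $i = 1, \ldots, r-1$, color $i$ is assigned to every edge having at least one endpoint in $X_i$ but no endpoint in $X_1 \cup \cdots \cup X_{i-1}$; color $r$ is assigned to all remaining edges, which lie entirely inside $Y := Y_1 \cup Y_2 \cup \cdots \cup Y_{r-1}$.

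For each $i < r$, every color-$i$ edge has an endpoint in $X_i$, so every two consecutive vertices on a color-$i$ path contain a vertex of $X_i$, and such a path therefore has at most $2|X_i| + 1 \le n - 1$ vertices. This part of the argument mirrors the analysis in Proposition~\ref{3n/2} essentially verbatim, and is not the hard part.

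The main obstacle is bounding color-$r$ paths. The set $Y$ has at most $(r-1)(\lfloor n/2\rfloor - 1)$ vertices, which equals $n - 2 < n$ when $r = 3$; in that case the claim is immediate, as a color-$r$ path lies entirely inside a set of at most $n-2$ vertices. For $r \ge 4$, however, $|Y|$ exceeds $n-1$, and this direct bound fails. To handle $r \ge 4$ I would proceed by induction on $r$: write $V(G) = V_0 \cup V'$ with $|V_0| \le n-2$ and $|V'| \le (r-2)(n-2)$, use color $r$ for the edges inside $V_0$ (yielding color-$r$ paths of order at most $n-2$), and invoke the inductive hypothesis to $(r-1)$-color $G[V']$ with colors $1, \ldots, r-1$. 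The delicate step will be distributing the bipartite edges between $V_0$ and $V'$ among the first $r-1$ colors without creating monochromatic paths of order $n$; the natural way to do this is to split $V_0 = A_0 \cup B_0$ with $|A_0|, |B_0| \le n/2 - 1$ and absorb each of $A_0$ and $B_0$ into a Gerencs\'er--Gy\'arf\'as-style ``small side'' associated with two of the existing colors, relying on the structure inherited from the inductive coloring. Making this absorption step close with the exact bound $(r-1)(n-2)$ promised by the proposition—rather than the weaker bound one gets by naively iterating the 2-color construction—is the technical heart of the argument.
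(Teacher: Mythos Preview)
Your layered construction handles $r=3$ correctly, but the inductive plan for $r\ge 4$ has a genuine gap at exactly the point you flag as ``delicate.'' Suppose the inductive $(r-1)$-coloring of $G[V']$ does carry Gerencs\'er--Gy\'arf\'as structure, say color~$j$ has a small side $S_j\subseteq V'$ with $|S_j|\le n/2-1$ and every color-$j$ edge meeting $S_j$. If you now give all edges from $A_0$ to $V'$ color $j$, then every color-$j$ edge is incident to $S_j\cup A_0$, but $|S_j\cup A_0|$ can be as large as $n-2$, so the alternation bound only gives color-$j$ paths of order up to $2(n-2)+1$, not $n-1$. Using just two colors for the crossing edges cannot recover the constant $(r-1)(n-2)$: the crossing edges must be spread across \emph{all} $r-1$ non-$r$ colors in a way that depends on finer structure of the partition of $V'$ than the inductive hypothesis (even strengthened to record one small side per color) provides. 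If you strengthen the hypothesis enough to make the absorption work, you end up carrying the entire block decomposition through the induction, which is really the direct construction in disguise.

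The paper's proof avoids this by giving that direct construction. Partition $V(G)$ into $2r-2$ blocks $X_1,\dots,X_{2r-2}$ of size at most $n/2-1$, indices read modulo $2r-2$. Color $r$ is the ``antipodal'' matching: all edges between $X_i$ and $X_{i+r-1}$ for each $i$, so every color-$r$ component sits inside some $X_i\cup X_{i+r-1}$ of order at most $n-2$. For $i=1,\dots,r-1$, color $i$ consists of the edges from $X_i$ to $X_{i+1},\dots,X_{i+r-2}$ together with the edges from $X_{i+r-1}$ to $X_{i+r},\dots,X_{i+2r-3}$; thus color $i$ lives on two disjoint halves of the block set, and within each half every color-$i$ edge is incident to a single block ($X_i$ or $X_{i+r-1}$) of size at most $n/2-1$, giving the Gerencs\'er--Gy\'arf\'as bound $2(n/2-1)+1<n$ on each color-$i$ path. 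The point is that this is an explicit edge-decomposition of $K_{2r-2}$ into one perfect matching and $r-1$ pairs of vertex-disjoint stars; no induction is needed.
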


\begin{proof}
Partition $V(G)$ into $2r-2$ sets $X_1, X_2, \ldots, X_{2r-2}$ each of order at most $\frac{n}{2}-1$. In the following, addition is modulo $2r-2$. For $i = 1, \ldots, r-1$, color with color $i$, the edges between $X_i$ and $X_{i+1},\ldots, X_{i+r-2}$ and the edges between $X_{i+r-1}$ and $X_{i+r},\ldots X_{i+2r-3}$. 
Use color $r$ for the edges between $X_i$ and $X_{i+r-1}$ for $i=1,\ldots r-1$. Color arbitrarily within the $X_i$'s. This coloring has no monochromatic  $P_n$ in color $i$ for $i=1,\ldots r-1$ for the same reason as in Proposition \ref{3n/2}. There is none in color $r$ since each component of color $r$ is of order less than $n$.
\end{proof}

\subsection{A reduction lemma}\label{sec:Nupper}

\begin{fact}\label{Nupper}
If $G=(V,E)$ is a graph with minimum degree at least $r+1$, then $|V|\leq \frac{2|E|}{r+1}$.
\end{fact}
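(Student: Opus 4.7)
The plan is to apply the handshaking lemma and rearrange. By the handshaking lemma, $\sum_{v \in V} \deg(v) = 2|E|$. The minimum degree hypothesis gives $\deg(v) \geq r+1$ for every $v \in V$, so summing yields $(r+1)|V| \leq \sum_{v \in V} \deg(v) = 2|E|$. Dividing by $r+1$ produces the bound $|V| \leq \frac{2|E|}{r+1}$.

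There is no real obstacle; this is a one-line degree count. The only thing worth noting is how this fact will be used in the surrounding development: as the text preceding the fact explains, in a lower bound argument one is free to repeatedly delete any vertex of degree at most $r$ (assign each of its $\leq r$ incident edges a distinct color, then apply induction to $G$ minus that vertex), so we may assume the minimum degree is at least $r+1$. Fact~\ref{Nupper} then converts this degree lower bound into the vertex-count upper bound $|V| \leq \frac{2|E|}{r+1}$ that complements the lower bounds on $|V|$ coming from Propositions~\ref{3n/2} and~\ref{3n/2_r}, and thereby restricts the range of $|V|$ that must subsequently be handled.
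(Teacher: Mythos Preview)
Your proof is correct; this is exactly the standard degree-count via the handshaking lemma. The paper itself does not give a proof of this fact (it is stated without argument), so there is nothing to compare against, though your contextual remarks about its role alongside Lemma~\ref{mindegree} and Propositions~\ref{3n/2} and~\ref{3n/2_r} accurately reflect how it is used.
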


The following lemma shows that in order to get a lower bound on the $r$-color size-Ramsey number of $P_{n}$, we can restrict our attention to graphs $G$ with minimum degree at least $r+1$, and consequently at most $\frac{2|E|}{r+1}$ vertices.  This generalizes an observation which is implicit in the proof of Beck's lower bound \cite{B2}. 

\begin{lemma}\label{mindegree}
Let $r$ and $n$ be positive integers with $n\geq r+4$. If every connected graph with at most $m$ edges and minimum degree at least $r+1$ (and consequently at most $2m/(r+1)$ vertices) has an $r$-coloring such that every monochromatic path has order less than $n-2$, then every graph with at most $m$ edges has an $r$-coloring such that every monochromatic path has order less than $n$.
\end{lemma}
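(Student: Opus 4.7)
The natural plan is to induct on $|V(G)|$ and use a degeneracy-style reduction to reach the hypothesis. If $G$ is disconnected, color the components separately, so we may assume $G$ is connected; and if $|V(G)|<n$ there is nothing to do. When $G$ has minimum degree at least $r+1$, the hypothesis applies to $G$ itself, yielding a coloring in which every monochromatic path has order less than $n-2 < n$, which is stronger than needed.

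Otherwise, $G$ contains a vertex $v$ with $\deg_G(v)\leq r$. We apply the inductive hypothesis to $G-v$ (which has one fewer vertex and at most $m$ edges) to obtain an $r$-coloring of $G-v$ whose monochromatic paths have order less than $n$. We then extend this coloring across $v$ by coloring its at most $r$ incident edges with pairwise distinct colors, which is possible because there are $r$ colors available; this makes $v$ a leaf in every color class it touches. For the color $c$ assigned to an edge $vu$, the longest monochromatic $c$-path through $v$ is then at most $1 + \ell_c(u)$, where $\ell_c(u)$ denotes the order of the longest monochromatic $c$-path in $G-v$ with $u$ as an endpoint.

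The main obstacle is that a naive distinct-color extension can turn a monochromatic path of order $n-1$ in $G-v$ ending at some $u\in N(v)$ into a monochromatic path of order $n$ in $G$, so one must choose carefully \emph{which} color is assigned to each edge at $v$. The cleanest approach is to strengthen the inductive statement to carry an additional per-vertex invariant, roughly ``for every vertex $u$ and every color $c$, $\ell_c(u)$ is bounded by $n-2$''. The hypothesis seeds this invariant (paths of order less than $n-2$ force $\ell_c\leq n-3$ on the $(r+1)$-core), and extending across a single low-degree vertex then consumes at most one unit of the slack that the gap between $n-2$ and $n$ provides. Showing that a compatible color assignment at each reinserted $v$ always exists becomes a bipartite selection problem between the at most $r$ edges at $v$ and the $r$ available colors, resolvable via a Hall-type argument using the room afforded by the condition $n\geq r+4$ (which leaves at least two spare colors when $\deg(v)=r$). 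Closing this bookkeeping as we peel back the entire $r$-degenerate shell on top of the $(r+1)$-core, without the invariant degrading faster than the slack allows, is the technical heart of the argument.
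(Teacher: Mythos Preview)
Your outline identifies the right target (peel off low-degree vertices, apply the hypothesis to what remains) but leaves the crucial step as an unproven assertion, and the invariant you propose does not survive the induction. For $r=2$, take a core containing vertices $a,b$ joined by both a colour-$1$ and a colour-$2$ path of order $n-3$. Reinserting $v_1$ with $N(v_1)=\{a,b\}$ already forces $\ell_1(v_1)=\ell_2(v_1)=n-2$; then reinserting $v_2$ with $N(v_2)=\{v_1,b\}$ gives $\ell_c(v_2)=n-1$ for some $c$ under \emph{either} assignment of distinct colours, so the bound $\ell_c\le n-2$ is lost at the second step. Continuing, one can force both neighbours of a later reinserted vertex to satisfy $\ell_1=\ell_2=n-1$, at which point every distinct-colour assignment creates a monochromatic $P_n$. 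No Hall-type selection at a single vertex fixes this, because the obstruction is that the colouring of $G-v$ was chosen without knowledge of $v$ and may already be saturated at all of $v$'s neighbours; the slack of $2$ between $n-2$ and $n$ is not a per-vertex budget you can spend repeatedly.

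The paper sidesteps this by processing all low-degree vertices simultaneously rather than one at a time. With $S=\{v:d(v)\le r\}$ it (i) adds edges inside $V\setminus S$ if needed to restore minimum degree $r{+}1$ there (this is where $n\ge r+4$ is used) and colours $G-S$ by hypothesis with no monochromatic $P_{n-2}$; (ii) colours $G[S]$ so that each colour class is a star forest, possible since $\Delta(G[S])\le r$; (iii) for each $v\in S$ sending $t$ edges to $V\setminus S$, assigns to those edges $t$ distinct colours \emph{not} used on $v$ inside $S$. Steps (ii)--(iii) guarantee that for every colour $c$, a vertex of $S$ either has all its $c$-edges inside $S$ (so lies in a star, longest $c$-path of order at most $3$) or has a unique $c$-edge, which crosses to $V\setminus S$. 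Hence any monochromatic path can meet $S$ only at its two endpoints, and its order is at most $(n-3)+2<n$: the slack of $2$ is spent once globally, not once per peeled vertex.
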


\begin{proof}
Suppose that every connected graph with at most $m$ edges and minimum degree at least $r+1$ has an $r$-coloring such that every monochromatic path has order less than $n-2$.  Let $G$ be a graph with at most $m$ edges.  Let $S=\{v\in V(G): d(v)\leq r\}$.  We begin by describing how to color the edges of $G-S$ so that $G-S$ contains no monochromatic paths of order $n-2$.  

If $G-S$ has fewer than $n-2$ vertices, then coloring the edges of $G-S$ arbitrarily we have an $r$-coloring of $G-S$ with no monochromatic paths of order $n-2$.  So suppose $G-S$ has at least $n-2\geq r+2$ vertices.  Let $v$ be a vertex in $G-S$ and suppose that $v$ has exactly $r+1-t$ neighbors in $G-S$ for some positive $t$.  This means $v$ had at least $t$ neighbors in $S$, so by making $v$ adjacent to $t$ vertices in $G-S$ (each of which was previously a non-neighbor of $v$) we make $v$ have degree at least $r+1$ and the total number of edges is still at most $m$.  We repeat this process for each vertex in $G-S$ which has degree less than $r+1$, updating on each step.  We end up with a graph $H$ such that $G-S\subseteq H$, $H$ has at most $m$ edges, and $\delta(H)\geq r+1$.  For each connected component of $H$, color the edges according to the hypothesis so that there are no monochromatic paths of order $n-2$.  This implies that $G-S$ has no monochromatic paths of order $n-2$.  

Now let $u_1, \dots, u_s$ be an arbitrary ordering of the vertices of $S$.  Since $\Delta(G[S])\leq r$, we color the edges incident with $u_1$ so that every edge receives a different color.  Let $2\leq i\leq s$ and suppose that for all $1\leq j\leq i-1$, we have colored all edges incident with $u_j$ so that if $C_j$ is the set of colors used on edges in $[\{u_j\}, \{u_1, \dots, u_{j-1}\}]$, then every other edge incident with $u_j$ gets a distinct color from $[r]\setminus C_j$.  Note that the only edges incident with $u_i$ which have already been colored are those which have one endpoint in $\{u_1, \dots, u_{i-1}\}$; let $C_i$ be the set of colors used on such edges.  Since $d(u_i)\leq r$ we can color the remaining edges incident with $u_i$ with distinct colors from $[r]\setminus C_i$.  

We have now colored all of the edges incident with $S$ such that every monochromatic component consisting of edges incident with $S$ is a star with all of its leaves in $S$. So every monochromatic path which only uses edges from $G-S$ has order less than $n-2$ and every monochromatic path which only uses edges from $E(G[S])\cup [S, V(G)-S]$ has order at most 3.  If a monochromatic, say color 1, path uses an edge from $[S, V(G)-S]$, then since its endpoint in $S$ is not incident with any other edges of color 1, this edge must be a pendant edge of the path (of which there are only two) and thus the longest monochromatic path in $G$ has order less than $(n-2)+2=n$.
\end{proof}

\begin{corollary}\label{size-ordinary}
For all $n\geq r+4$,
$\hat{R}_r(P_n)\geq \frac{r+1}{2}\cdot R_r(P_{n-2})$.
\end{corollary}

\begin{proof}
Let $G=(V,E)$ be a connected graph with fewer than $\frac{r+1}{2}\cdot R_r(P_{n-2})$ edges and minimum degree at least $r+1$.  So $|V|\leq \frac{2|E|}{r+1}<R_r(P_{n-2})$ and thus $G$ has an $r$-coloring with no monochromatic $P_{n-2}$.  So by Lemma \ref{mindegree}, every graph with fewer than $\frac{r+1}{2}\cdot R_r(P_{n-2})$ edges has an $r$-coloring with no monochromatic $P_n$.  
\end{proof}

\begin{remark}
Proposition \ref{3n/2} and Corollary \ref{size-ordinary} imply that $$\hat{R}(P_n)\geq \frac{3}{2}\cdot R(P_{n-2})\geq \frac{3}{2}\left(\frac{3}{2}(n-2)-\frac{3}{2}\right)=\frac{9}{4}n-\frac{27}{4}.$$
\end{remark}


\begin{remark}\label{r^2-1}
Proposition \ref{3n/2_r} and Corollary \ref{size-ordinary} imply that for $r\geq 3$, $$\hat{R}_r(P_n)\geq \frac{r+1}{2}\cdot R_r(P_{n-2})> \frac{r+1}{2}(r-1)(n-4)=\frac{r^2-1}{2}n-2(r^2-1).$$
\end{remark}


\begin{remark}
The bound in Remark \ref{r^2-1} is less than the bounds given in Theorem \ref{thm:main-r-col}.  However, Remark \ref{r^2-1} is the easiest way to see that $\hat{R}_r(P_n)=\Omega(r^2n)$.
\end{remark}

\subsection{Trimming a tree so that no long paths remain}\label{sec:prune}

The following is a slight generalization of the lemma used in \cite{B} and \cite{DP2} to give a lower bound on the size-Ramsey number of a path.

\begin{lemma}\label{snip}
For every tree $T$ with $|V(T)|\geq \floor{n/2}$, there exists a set $E'$ of at most $\floor{\frac{|V|}{\floor{n/2}}}-1$ edges such that $T-E'$ has no paths of order $n$.  
\end{lemma}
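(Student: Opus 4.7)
My plan is to induct on $N := |V(T)|$, writing $k = \floor{n/2}$ throughout and imitating the arguments of \cite{B} and \cite{DP2} while keeping careful track of floors so that the statement holds for all $n$.

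If $N < n$, then $T$ itself contains no $P_n$, and we may take $E' = \emptyset$; the bound $\floor{N/k} - 1 \geq 0$ is guaranteed by the hypothesis $N \geq k$, which is the only hypothesis needed at this step.

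If $N \geq n \geq 2k$ and $T$ does contain some path $P = v_1 v_2 \cdots v_n$, I would delete the edge $v_k v_{k+1}$. Since $T$ is a tree, this produces exactly two subtrees $T_1$ and $T_2$ with $v_1,\dots,v_k \in V(T_1)$ and $v_{k+1},\dots,v_n \in V(T_2)$. Hence $N_1 := |T_1| \geq k$ and $N_2 := |T_2| \geq n - k \geq k$, so both $T_i$ again satisfy the hypothesis of the lemma with strictly fewer vertices. The induction hypothesis supplies $E'_i \subseteq E(T_i)$ with $|E'_i| \leq \floor{N_i/k} - 1$ such that $T_i - E'_i$ is $P_n$-free. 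Putting $E' := \{v_k v_{k+1}\} \cup E'_1 \cup E'_2$ yields a $P_n$-free graph $T - E'$ with
\[
|E'| \;\leq\; 1 + (\floor{N_1/k} - 1) + (\floor{N_2/k} - 1) \;\leq\; \floor{N/k} - 1,
\]
the last inequality being the standard superadditivity $\floor{a/k} + \floor{b/k} \leq \floor{(a+b)/k}$.

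The only delicate point is cutting \emph{exactly} at the middle edge $v_k v_{k+1}$ of the path $P$, so that both resulting subtrees have size at least $k$ and the induction can be re-entered on both sides. It is this choice that converts the naive bound $\floor{N/k}$ coming from the two recursive calls plus one extra edge into the desired $\floor{N/k}-1$. Everything else is routine.
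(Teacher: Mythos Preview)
Your argument is correct and follows essentially the same approach as the paper: delete the middle edge of a $P_n$ so that each resulting subtree has at least $\lfloor n/2\rfloor$ vertices, and recurse. The only cosmetic difference is in the bookkeeping---the paper counts components at the end (each has $\geq \lfloor n/2\rfloor$ vertices, hence at most $\lfloor N/\lfloor n/2\rfloor\rfloor$ of them, hence at most that many minus one deleted edges), whereas you carry the bound through the induction via the superadditivity $\lfloor N_1/k\rfloor+\lfloor N_2/k\rfloor\leq\lfloor N/k\rfloor$.
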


\begin{proof}
If $T$ has no path of order $n$ we are done, so choose a path of order $n$ and delete the middle edge (or one of the two middle edges if $n$ is odd).  This separates $T$ into two subtrees, each with at least $\floor{n/2}$ vertices.  Now repeat on each subtree and call the set of deleted edges, $E'$.  When the process stops, every component of $T-E'$ has at least  $\floor{n/2}$ vertices and no paths of order $n$.
 Thus $T-E'$ has at most $\floor{\frac{|V|}{\floor{n/2}}}$ components, which means  $|E'| \le \floor{\frac{|V|}{\floor{n/2}}}-1$.
\end{proof}

\begin{remark}
Proposition \ref{3n/2} and Lemma \ref{snip} imply that $\hat{R}(P_n)\geq \frac{5}{2}n-7$.
\end{remark}

\begin{proof}
Let $G=(V,E)$ be a graph with at most $\frac{5n}{2}-\frac{15}{2}$ edges.  We may assume $G$ is connected and by Proposition \ref{3n/2} we may assume $\frac{3n}{2}-\frac{3}{2}\leq |V|$.  Let $T$ be a spanning tree of $G$ (which contains at least $\frac{3n}{2}-\frac{5}{2}$ edges).  Applying Lemma \ref{snip}, there exists a forest $F$ with $F\subseteq T$ such that $F$ has at least $\frac{3n}{2}-\frac{11}{2}$ edges and no paths of order $n$, so we may color all of the edges of $F$ red without creating a red $P_n$.  There are at most $\frac{5n}{2}-\frac{15}{2}-(\frac{3n}{2}-\frac{11}{2})=n-2$ edges remaining in $E(G)\setminus E(F)$, all of which we may color blue without creating a blue $P_n$.  
\end{proof}

\subsection{Main lemma}\label{sec:mainlem}

We will only use the following lemma in the case where $k=1$ or $k=2$, but we state it in general here.  Note that for instance when $k=1$, this says that if $G$ is a graph on $n-1<N\leq 2(n-1)$ vertices, then there is a bipartition of $V(G)$ into sets of order $n-1$ and $N-(n-1)$ such that the number of edges crossing the partition is approximately what we would get by taking a random such partition of a graph with $|E(G)|-N$ edges.

\begin{lemma}\label{partition}
There exists $n_0$ such that for all $n\geq n_0$, if $G=(V,E)$ is a graph on $N\geq n$ vertices, then the following holds.  Let $k$ be a positive integer uniquely defined by $k(n-1)<N\leq (k+1)(n-1)$ where $k\leq n^{1/32}$ and let $\alpha=\frac{n-1}{N}$.  If every component of $G$ has at least $n$ vertices, $\Delta(G)\leq N^{1/16}$, and $|E|\leq 100N\leq 100(k+1)n$, then there exists a partition of $V$ into $k+1$ parts $V_1, \dots, V_{k+1}$ such that $|V_1|,\dots,|V_k|, |V_{k+1}|\leq n-1$ and $|V_{k+1}|\leq N-k(n-1)+N^{15/16}$ and the number of edges crossing the partition is at most $(1-k\alpha^2-(1-k\alpha)^2)(|E|-N)+N^{15/16}$.
\end{lemma}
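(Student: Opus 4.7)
The plan is to construct a random partition of $V(G)$ guided by a decomposition of a spanning forest of $G$, and then argue via concentration that some outcome satisfies all the required bounds. The natural expected number of crossing edges for a partition with relative part sizes $\alpha_1,\dots,\alpha_1,\alpha_2$ is $(1-k\alpha_1^2-\alpha_2^2)|E|$; the crucial ``$-N$'' in the stated bound will come from keeping almost all edges of a spanning forest of $G$ within a single piece of the partition, exploiting the hypothesis that every component of $G$ has at least $n-2$ vertices.

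First I would take a spanning forest $F \subseteq G$, so that $|E(F)| = N - c$ with $c \le N/(n-2) \le k+1$. Then I would decompose each tree of $F$ into subtrees (``pieces'') of size at most $s := \lceil N^{1/2}\rceil$ via a post-order DFS cut rule: detach the subtree currently rooted at $v$ as soon as its size first reaches $s/2$. Since $n-3 \ge N^{63/64}/2 \gg s\Delta(G)$, every resulting piece $P_j$ has $|P_j| \in [s/2,\, O(s\Delta(G))] = [s/2,\, O(N^{9/16})]$ and hence fits inside a single part of size $\le n-3$. This yields $t = O(\sqrt{N})$ pieces and deletes only $|B| = t - c = O(\sqrt{N})$ tree edges, so at least $N - O(\sqrt{N})$ edges of $G$ lie entirely within some piece.

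Next I would assign each piece $P_j$ independently to $V_i$ with probability $\alpha_i$ (writing $\alpha_i := \alpha_1$ for $i\le k$ and $\alpha_{k+1} := \alpha_2$). Let $Y$ be the number of crossing edges. Since edges within a single piece never cross,
\[
\mathbb{E}[Y] \;=\; (1 - k\alpha_1^2 - \alpha_2^2)\,|E_{\text{between pieces}}| \;\le\; (1 - k\alpha_1^2 - \alpha_2^2)(|E| - N) + O(\sqrt{N}).
\]
Azuma--Hoeffding applied to the exposure martingale on piece assignments (each of the $O(\sqrt{N})$ pieces contributes a step of size at most $b_j\Delta(G) = O(s\Delta(G))$) yields $|Y - \mathbb{E}[Y]| \le \tfrac14 N^{15/16}$ with high probability. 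In parallel, each $|V_i|$ is a weighted sum of independent Bernoulli indicators with $\mathbb{E}[|V_i|] = \alpha_i N$, and Hoeffding gives $\bigl||V_i| - \alpha_i N\bigr| = O(N^{55/64})$ with high probability. A union bound over these $k+2$ events shows that a random outcome with all of these bounds exists.

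Finally I would repair the partition: for each $i \le k$ with $|V_i| > n-3$, move the $|V_i| - (n-3) = O(N^{55/64})$ excess vertices into $V_{k+1}$. At most $k \cdot O(N^{55/64}) = O(N^{56/64})$ vertices are moved in total, each changing $Y$ by at most $\Delta(G) = N^{1/16}$, so $Y$ grows by at most $O(N^{60/64}) = O(N^{15/16})$, preserving the edge bound. The resulting $|V_{k+1}| \le \alpha_2 N + O(N^{55/64}) + k\cdot O(N^{55/64}) \le N - k(n-3) + N^{15/16}$, as required. The main obstacle is the delicate parameter balancing: $s$ must be large enough that pieces absorb almost all tree edges of $F$, small enough that the Azuma step sizes yield the $N^{15/16}$ concentration for $Y$, and the per-part deviation together with the cumulative effect of $k$ repair moves must both fit inside the $N^{15/16}$ slack. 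The hypothesis $k \le n^{1/64}$ is calibrated precisely so that all three constraints are simultaneously satisfiable.
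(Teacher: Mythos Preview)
Your approach is essentially the paper's: decompose a spanning forest into $O(\sqrt{N})$ connected pieces each of size $O(\Delta\sqrt{N})\le O(N^{9/16})$, assign pieces to $V_1,\dots,V_{k+1}$ independently with probabilities $\alpha_1,\dots,\alpha_1,\alpha_2$, control part sizes by concentration, and repair. Two inessential technical differences: the paper never invokes Azuma on the crossing-edge count $Y$---instead it argues that since a $1-e^{-\Omega(n^{1/8})}$ fraction of outcomes satisfy the size bounds, some such outcome must also have $Y\le\mathbb{E}[Y]+1$, else $\mathbb{E}[Y]$ would be too large; and in the repair it moves only vertices from $S=\{v:d(v)\le 800k^3\}$ (a $(1-\tfrac{1}{4k^3})$-fraction of $V$ by degree-counting) rather than bounding each move by the global $\Delta(G)$.

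There is one genuine gap in your repair. You move excess vertices only \emph{into} $V_{k+1}$, which secures $|V_i|\le n-3$ for $i\le k$ and the bound $|V_{k+1}|\le N-k(n-3)+N^{15/16}$, but it does not secure the separately stated requirement $|V_{k+1}|\le n-3$. When $N$ is close to $(k+1)(n-3)$, so that $\alpha_2 N$ is already close to $n-3$, dumping up to $k\cdot O(N^{55/64})$ additional vertices into $V_{k+1}$ can push it above $n-3$. The fix is immediate: since $\sum_i|V_i|=N\le (k+1)(n-3)$, whenever some part exceeds its target size some other part is below target, so a two-directional repair (as the paper does, swapping vertices between over- and under-full parts) balances all parts with the same $O(kN^{55/64})$ total moves and the same $O(N^{15/16})$ additive effect on the crossing count.
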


The first tool needed to prove Lemma \ref{partition} is the following fact mentioned by Alon \cite{A}, stated in general and made explicit here.

\begin{lemma}\label{treepartition}
Let $G$ be a connected graph on $p$ vertices with maximum degree $\Delta$. For any $1\le \ell < p$, we can find a collection of connected subgraphs $S_1, \ldots, S_t$ of $G$ such that
\begin{enumerate}[label=(T\arabic*)]
\item\label{t1} $V(S_1), \ldots, V(S_t)$ form a partition of $V(G)$ with $\ell < |S_i| \le 1+\Delta \ell $ for all $i\in [t-1]$ and $|S_t| \le 1 + \Delta \ell$
\item\label{t2} $\sum_{i=1}^t |E(S_i)| \ge p - t$
\item\label{t3} if $\ell  = \floor{\sqrt{p}},$ then $\frac{1}{\Delta+1} \sqrt{p} \le t \le \sqrt{p}+1$
\end{enumerate} 
\end{lemma}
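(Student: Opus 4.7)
My plan is to obtain $S_1, \ldots, S_t$ by a greedy peeling of a rooted spanning tree of $G$. First I would fix any spanning tree $T$ of $G$, root it at an arbitrary vertex $r$, and write $T_v$ for the rooted subtree hanging from a vertex $v$. The main loop runs as follows: while the current residual tree $T'$ has more than $1+\Delta\ell$ vertices, I locate a vertex $v\in V(T')$ with $|T_v|>\ell$ but $|T_c|\le \ell$ for every child $c$ of $v$ in $T'$; I then set $S_i:=G[V(T_v)]$, delete $V(T_v)$ from $T'$, and iterate. Such a $v$ exists: starting at the root (where $|T_r|=|V(T')|>\ell$) I can repeatedly descend to any child whose own subtree still has more than $\ell$ vertices, and since each descent strictly decreases subtree size the process must terminate at a vertex with the required property. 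Removing $V(T_v)$ leaves a connected residual tree because $T_v$ is a rooted subtree. When the loop halts, I set $S_t:=G[V(T')]$ to be the (possibly small) remainder.

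The size bounds in \ref{t1} are then immediate: for $i<t$, the choice of $v$ gives $|S_i|=|T_v|>\ell$, while $v$ has at most $\Delta$ children each carrying a subtree of size at most $\ell$, so $|S_i|=1+\sum_c |T_c|\le 1+\Delta\ell$; the final piece satisfies $|S_t|\le 1+\Delta\ell$ by the loop's exit condition. For \ref{t2}, each $S_i$ contains the subtree $T_v$ (or the residual in the last step) as a spanning subgraph, giving $|E(S_i)|\ge |S_i|-1$, and summing yields $\sum_{i=1}^t |E(S_i)|\ge p-t$.

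For \ref{t3}, I would specialize to $\ell=\lfloor\sqrt{p}\rfloor$, so $\ell+1\ge\sqrt{p}$ and $1+\Delta\ell\le(\Delta+1)\sqrt{p}$ whenever $p\ge 1$. Summing $|S_i|\ge \ell+1$ over $i\in[t-1]$ gives $p\ge(t-1)\sqrt{p}$, hence $t\le \sqrt{p}+1$; summing $|S_i|\le(\Delta+1)\sqrt{p}$ over all $i\in[t]$ gives $p\le t(\Delta+1)\sqrt{p}$, hence $t\ge \sqrt{p}/(\Delta+1)$.

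I do not anticipate a serious obstacle. The one point requiring care is verifying that the peeling step always finds a valid $v$ (handled by the descent argument), and the only remaining bookkeeping is noting that connectedness of the residual is automatic for rooted subtree deletions and that the boundary case $|V(T)|\le 1+\Delta\ell$ is trivial (the loop never runs, $t=1$ and $S_1=G$ satisfies everything).
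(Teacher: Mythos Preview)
Your proposal is correct and follows essentially the same approach as the paper: root a spanning tree, repeatedly locate a deepest vertex $v$ whose subtree exceeds $\ell$ while all its children's subtrees do not, peel off that subtree as the next $S_i$, and let the residual be $S_t$. The only cosmetic difference is your stopping criterion (halt when $|T'|\le 1+\Delta\ell$) versus the paper's (halt when the descent lands at the root or no valid $v$ exists); these are equivalent, since if the loop condition $|T'|>1+\Delta\ell$ holds then the located $v$ cannot be the root, which also justifies your claim that the residual stays connected.
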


\begin{proof}
Let $T_0$ be a rooted spanning tree of $G$ with (arbitrary) root $r$.  For a rooted tree $T$ and vertex $v$, let $s(T,v)$ denote the subtree of $T$ rooted at vertex $v$ and let $C(v)$ denote the set of children of $v$.  Assume $T_i$ has been defined for some $i\ge 0$ and that $r$ is still the root of $T_i$. Traverse down $T_i$ from $r$ until encountering a vertex $v$ (if one exists) such that $|s(T_i,v)| > \ell$ and $|s(T_i,u)| \le \ell$ for all  $u\in C(v)$. Then $s(T_i,v)$ satisfies
\begin{align}\label{eq:t-right-size}
\ell < |s(T_i,v)| = 1 + \sum_{u\in C(v)}|s(T_i,u)| \le 1 + \Delta \ell.
\end{align}
If $v\neq r$, let $S_{i+1} = s(T_i, v)$ and $T_{i+1} = T_i - S_{i+1}$ and repeat for $i+1$. If $v=r$ or if no such vertex $v$ exists, then set $S_{i+1}=S_t = T_i$. 
Each $S_i$ is connected by construction. Property \ref{t1} is satisfied by \eqref{eq:t-right-size}.  Property \ref{t2} follows since each $S_i$ is connected and thus  $\sum_{i=1}^t |E(G[S_i])|  \geq  \sum_{i=1}^t (|S_i| -1) = p - t$.  

Finally, if $\ell= \floor{\sqrt{p}}$ we have 
\[     (t-1)(\floor{\sqrt{p}} + 1) \le  \sum_{i=1}^t|S_i| = p \le t(1+ \Delta\sqrt{p})\]
and from each of $(t-1)(\floor{\sqrt{p}} + 1) \le p$ and $p \le t(1+ \Delta\sqrt{p})\leq t\sqrt{p}(1+\Delta)$, we derive the bounds on $t$ in \ref{t3}.
\end{proof}


The next tool we need is the following concentration inequality of McDiarmid \cite{M} (see also \cite{FK}).  We note that McDiarmid's inequality isn't strictly necessary in this first application, but as we will use it again later in a different context, it is easiest to use it for both purposes.    

\begin{lemma}[McDiarmid's inequality]\label{lem:mcd} Let $Z=Z(X_1, \ldots, X_N)$ be a random variable that depends on $N$ independent random variables $X_1, \ldots, X_N$.  Suppose that 
\[|Z(X_1,\ldots, X_k, \ldots, X_N ) - Z(X_1, \ldots, X_k', \ldots, X_N)| \le c_k\] 
for all $k=1,\ldots, N$ and $X_1,\ldots, X_n, X'_k$. Then for any $t\ge 0$ we have
\[\mathbb{P}\sqbs{Z\ge \mathbb{E}[Z]  + t} \le \exp\of{-\frac{t^2}{2\sum_{k\in [N]} c_k^2 }}.
\]
\end{lemma}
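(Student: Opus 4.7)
The plan is to prove McDiarmid's inequality via the classical bounded differences / martingale method, realizing $Z - \mathbb{E}[Z]$ as the final value of a Doob martingale whose increments can be controlled using the hypothesis and then applying the Azuma--Hoeffding inequality.

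First I would set up the Doob martingale. Let $\mathcal{F}_k$ be the $\sigma$-algebra generated by $X_1,\ldots,X_k$ (with $\mathcal{F}_0$ trivial), and define
\[
Z_k \;=\; \mathbb{E}[Z \mid \mathcal{F}_k], \qquad k=0,1,\ldots,N.
\]
Then $Z_0 = \mathbb{E}[Z]$, $Z_N = Z$, and $\{Z_k\}$ is a martingale. Writing $Z - \mathbb{E}[Z] = \sum_{k=1}^N D_k$ where $D_k := Z_k - Z_{k-1}$, each increment satisfies $\mathbb{E}[D_k \mid \mathcal{F}_{k-1}]=0$.

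The second step, which is the core of the argument, is to show that $|D_k| \le c_k$ almost surely. Here is where independence of the $X_i$'s is used crucially. Because the variables are independent, for any measurable $h$ we have $\mathbb{E}[h(X_1,\ldots,X_N)\mid \mathcal{F}_k] = g_k(X_1,\ldots,X_k)$ where $g_k(x_1,\ldots,x_k) = \mathbb{E}[h(x_1,\ldots,x_k,X_{k+1},\ldots,X_N)]$, with the expectation taken only over $X_{k+1},\ldots,X_N$. Applying this with $h=Z$ gives
\[
D_k \;=\; g_k(X_1,\ldots,X_k) \;-\; \mathbb{E}_{X_k'}\bigl[g_k(X_1,\ldots,X_{k-1},X_k')\bigr],
\]
where $X_k'$ is an independent copy of $X_k$. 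The bounded differences hypothesis states that for fixed $X_1,\ldots,X_{k-1},X_{k+1},\ldots,X_N$, the function $x_k \mapsto Z(\ldots,x_k,\ldots)$ oscillates by at most $c_k$; averaging over $X_{k+1},\ldots,X_N$ shows the same for $g_k(X_1,\ldots,X_{k-1},\cdot)$. Thus conditional on $\mathcal{F}_{k-1}$, the random variable $g_k(X_1,\ldots,X_k)$ takes values in an interval of length at most $c_k$, so $|D_k|\le c_k$.

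The final step is the standard Chernoff/Azuma argument. For $\lambda>0$, Hoeffding's lemma applied to the conditional distribution of $D_k$ (which is centered and lies in an interval of length $\le 2c_k$) yields $\mathbb{E}\bigl[e^{\lambda D_k}\mid \mathcal{F}_{k-1}\bigr] \le \exp(\lambda^2 c_k^2/2)$. Iterating by the tower property,
\[
\mathbb{E}\bigl[e^{\lambda(Z-\mathbb{E}[Z])}\bigr] \;\le\; \exp\!\Bigl(\tfrac{\lambda^2}{2}\textstyle\sum_k c_k^2\Bigr).
\]
Markov's inequality then gives $\mathbb{P}[Z-\mathbb{E}[Z]\ge t] \le \exp\bigl(-\lambda t + \tfrac{\lambda^2}{2}\sum_k c_k^2\bigr)$, and optimizing at $\lambda = t/\sum_k c_k^2$ produces the claimed bound $\exp\bigl(-t^2/(2\sum_k c_k^2)\bigr)$.

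The main obstacle is the second step: one must be careful to use independence to rewrite the conditional expectation as an average over an independent copy $X_k'$ before invoking the bounded differences hypothesis. Without independence, the conditional range of $Z$ in the $k$-th coordinate need not be controlled by $c_k$, and the bound on $|D_k|$ fails. Everything else (Hoeffding's lemma, the Chernoff step, the $\lambda$-optimization) is routine.
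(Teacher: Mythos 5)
Your proof is correct: this is the standard Doob-martingale (Azuma--Hoeffding / bounded differences) argument, and the independence step you highlight -- rewriting $Z_{k-1}$ as an average of $g_k(X_1,\ldots,X_{k-1},X_k')$ over an independent copy $X_k'$ before invoking the oscillation bound -- is exactly the point that needs care. The paper itself offers no proof; it cites the lemma directly from McDiarmid's survey, so there is no in-paper argument to compare against. One small remark: you actually show that, conditionally on $\mathcal{F}_{k-1}$, the increment $D_k$ lies in an interval of length at most $c_k$ (not merely $2c_k$); feeding that into Hoeffding's lemma gives the sharper constant $\exp\left(-2t^2/\sum_k c_k^2\right)$, of which the stated bound is a weakening, so your argument proves slightly more than the lemma claims.
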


We are now ready to prove the main lemma.

\begin{proof}[Proof of Lemma \ref{partition}]
Apply Lemma \ref{treepartition} with $\ell = \floor{\sqrt{N}}$ to partition the components of $G$ into  $\frac{\sqrt{N}}{\Delta+1} \le t\le \sqrt{N}+1$ connected subgraphs $S_1, \ldots, S_t$ each of order at most $1+\Delta\sqrt{N}$.  There are at least $N-(t-1)\geq N-\sqrt{N}$ edges accounted for in these subgraphs.  Define $m=|E|-(N-\sqrt{N})$ to be an upper bound on the number of edges of $G$ which are not contained in these subgraphs.  

We independently at random place each such connected subgraph in one of the sets $V_1, \dots, V_k, V_{k+1}$ with probability $\alpha$ for all $V_i$ with $i\in [k]$ and probability $1-k\alpha$ for $V_k$. Let $Z_i$ represent the number of vertices which land in the set $V_i$ for all $i\in [k+1]$.

Then $\mathbb{E}\sqbs{Z_1} = \dots= \mathbb{E}\sqbs{Z_k} = \alpha N=n-1$ and $\mathbb{E}\sqbs{Z_{k+1}} = (1-k\alpha)N=N-k(n-1)$.
Note that changing the position of one of $S_1, \ldots, S_t$ can change any of these variables by at most $1+\Delta\sqrt{N} \le 1+N^{9/16}$.
Thus we may apply McDiarmid's inequality (Lemma \ref{lem:mcd}) and the union bound to conclude that the probability that for some $i\in [k]$, $Z_i$ exceeds $n-1+N^{7/8}$ or $Z_{k+1}$ exceeds $N-k(n-1) + N^{7/8}$ is at most
\[(k+1)\cdot\exp\of{- \frac12\cdot\frac{N^{7/4}}{(\sqrt{N}+1)\cdot(1+N^{9/16})^2}   } = \exp\of{-\Omega(N^{1/8})}.\]
Thus at least $1 - e^{-\Omega(N^{1/8})}$ proportion of the partitions satisfy 
\begin{align}\label{eq:parts-small}
|V_1|, \dots, |V_k| \le n-1+N^{7/8}\,\, \textrm{ and }\,\, |V_{k+1}|\le N-k(n-1) + N^{7/8}.
\end{align}  
Now, by linearity of expectation, the expected number of edges $\mu$ crossing the partition satisfies
\begin{align*}
\mu\le (1-k\alpha^2-(1-k\alpha)^2)m.
\end{align*}
So there is a partition $V_1, \dots, V_{k}, V_{k+1}$ satisfying \eqref{eq:parts-small} with at most $(1-k\alpha^2-(1-k\alpha)^2)m+1$ edges crossing the partition; otherwise we would have
\[ (1-k\alpha^2-(1-k\alpha)^2)m\ge \mu \ge (1 - e^{-\Omega(N^{1/8})})((1-k\alpha^2-(1-k\alpha)^2)m+1)>(1-k\alpha^2-(1-k\alpha)^2)m,\]
a contradiction.

Finally, in order to achieve the desired upper bounds on the sizes of $V_1, \dots, V_k, V_{k+1}$, we potentially have to slightly modify the partition given above.  When modifying the partition, we only want to move vertices which have bounded degree, so let $S=\{v\in V(G): d(v)\leq 400k\}$ and note that 
\begin{equation}\label{Sbig}
|S|>(1-\frac{1}{2k})N;
\end{equation}
as otherwise there are at least $\frac{N}{2k}$ vertices of degree greater than $400k$ which gives $$200N\geq 2|E| = \sum_{v\in V(G)} d(v)> \frac{N}{2k}\cdot 400k=200N,$$ a contradiction.  

Now if $|V_i|>n-1$ for $i\in [k+1]$, there must exist $j\in [k+1]\setminus \{i\}$ such that $|V_j|<n-1$, so we select a vertex from $V_i\cap S$ and we move it to $V_j$.  Because $|S|>(1-\frac{1}{2k})N$ and by \eqref{eq:parts-small}, we can repeat this process for at most $kN^{7/8}$ steps until we have $|V_1|, \dots, |V_k|, |V_{k+1}|\leq n-1$ and $|V_{k+1}|\leq N-k(n-1)+(k+1)N^{7/8}\leq N-k(n-1)+N^{15/16}$.  At the end of this process, the number of edges crossing the partition is at most $$(1-k\alpha^2-(1-k\alpha)^2)m +1+ kN^{7/8}\cdot 400k < (1-k\alpha^2-(1-k\alpha)^2)(|E|-N) + N^{15/16}$$
as desired
%
\end{proof}

\subsection{Extending Proposition \ref{3n/2}}\label{extend}

The following observations extend Proposition \ref{3n/2}.  We note that there is a similarity between this observation and the concept of the \emph{integrity} of a graph (see \cite{V}).

\begin{observation}\label{n/2-path}
If $G$ has a set $S$ of at most $\frac{n}{2}-1$ vertices such that every component of $G-S$ has no path of order $n$, then there exists a 2-coloring of the edges of $G$ such that every monochromatic path has order less than $n$.
\end{observation}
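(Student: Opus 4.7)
The plan is to use the same two-coloring strategy as in Proposition \ref{3n/2}: color every edge incident with $S$ red, and color every remaining edge (i.e., every edge of $G-S$) blue. The goal is then to verify that neither color class contains a path of order $n$.

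For the blue graph, this is immediate from the hypothesis: the blue edges are precisely $E(G-S)$, so every blue path lies inside a single component of $G-S$, and by assumption no such component contains a path of order $n$.

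For the red graph, I would argue exactly as in the proof of Proposition \ref{3n/2}. Every red edge has at least one endpoint in $S$, so any two consecutive vertices on a red path cannot both lie in $V(G)\setminus S$. Hence if a red path $P$ has vertex sequence $v_1,v_2,\ldots,v_k$, the vertices of $P$ lying outside $S$ form an independent set in the sense that no two are adjacent in $P$, so at most $|S|+1$ vertices of $P$ lie outside $S$. Therefore
\[
|V(P)| \le 2|S|+1 \le 2\left(\tfrac{n}{2}-1\right)+1 = n-1,
\]
which gives the desired bound.

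There is no real obstacle here; the statement is a mild strengthening of the Gerencsér--Gyárfás coloring, and the only new ingredient is the observation that the inner partition class $V(G)\setminus S$ need not be connected or even individually path-free, as long as no component of $G-S$ contains $P_n$. The proof is essentially a two-line verification once the coloring is written down.
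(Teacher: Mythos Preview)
Your proposal is correct and follows exactly the paper's own argument: color all edges incident to $S$ red and the rest blue, then bound the red path length by $2|S|+1\le n-1$ and the blue path length by the hypothesis on components of $G-S$. The only difference is that you spell out the $2|S|+1$ bound in slightly more detail than the paper does.
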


\begin{figure}[ht]
\begin{center}
\includegraphics[scale=1]{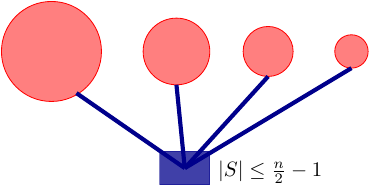}
\caption{Coloring the edges of $G$ in Observation \ref{n/2-path}}\label{obs31}
\end{center}
\end{figure}

\begin{proof}
We color all edges incident to $S$ with blue and every other edge red. By the condition on the components of $G-S$, there will clearly be no red path of order $n$. Any pair of consecutive vertices on a blue path must contain at least one vertex of $S$. Thus the longest blue path is of order less than $n$.
\end{proof}

We also note that there is a similarity between the following observation and the concept of the \emph{edge integrity} of a graph (see \cite{AHK}).

\begin{observation}\label{edgeint}
If $G$ has a subgraph $H$ such that $H$ has no path of order $n$ (in particular, if $H$ has at most $n-2$ edges) and every component of $(V(G), E(G)\setminus E(H))$ has order less than $n$, then there exists a 2-coloring of the edges of $G$ such that every monochromatic path has order less than $n$.
\end{observation}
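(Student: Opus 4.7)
The plan is to simply $2$-color the edges of $G$ by declaring the edges of $H$ red and the edges of $E(G)\setminus E(H)$ blue, then verify that neither color class contains a path of order $n$.

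First I would check the red color class. Since the red edges are exactly $E(H)$, every red path is a path in $H$, which by hypothesis has no path of order $n$. For the parenthetical claim, I would observe that a path of order $n$ has $n-1$ edges, so any graph with at most $n-2$ edges trivially has no such path; hence the first hypothesis is automatically implied by the bound on $|E(H)|$.

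Next, for the blue color class, I would observe that every blue path uses only edges of $E(G)\setminus E(H)$, so it must lie entirely within a single connected component of $(V(G), E(G)\setminus E(H))$. Since each such component has order less than $n$ by assumption, no blue path can have $n$ vertices. Combining the two cases yields the desired $2$-coloring.

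There is no real obstacle here; the statement is essentially immediate once the coloring is specified. The usefulness of the observation lies in combining it with tools such as Lemma \ref{snip}: given a spanning tree $T$ of $G$, one prunes $T$ by removing a small set $E'$ of edges so that the resulting forest $F$ has no path of order $n$, and the hope is that the graph $(V(G), E(G)\setminus E(F))$ (which contains $E'$ together with the non-tree edges) splits into components of order less than $n$. When it does, Observation \ref{edgeint} converts this structural decomposition directly into a valid $2$-coloring.
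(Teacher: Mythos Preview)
Your proposal is correct and matches the paper's own proof exactly: color $E(H)$ red and $E(G)\setminus E(H)$ blue, then observe that each color class is forbidden from containing a path of order $n$ by the respective hypothesis. The paper's proof is in fact just the one-line instruction to perform this coloring, so your verification steps and contextual remarks only make the argument more explicit.
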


\begin{proof}
Color the edges of $H$ with red and color the remaining edges blue.
\end{proof}

The following lemma says that if the number of vertices is not too much more than $3n/2$ and the number of edges of $G$ is small enough, we can essentially color $G$ in a way which resembles the coloring in Proposition \ref{3n/2}.

Given a graph $H$, and a positive real $\rho>0$, we say that a graph $G=(V,E)$ is \emph{$(\rho,H)$-free} if the graph $G'$ obtained from $G$ by adding a set of vertices $U$ with $|U|\leq \rho$ such that for all $u\in U$, $N(u)=(U\cup V)\setminus\{u\}$ does not contain a copy of $H$.  For example, every graph with at most $n-2\ceiling{\rho}-2$ edges is $(\rho, P_n)$-free.

\begin{lemma}\label{close3n/2}
Let $0<\ep<\frac{1}{100}$, let $n$ be sufficiently large, and let $G=(V, E)$ be a graph with $\delta(G)\geq 3$ and  $(\frac{3}{2}-\ep)n<|V|<(\frac{5}{3}-2\ep)n$.  Let $0<\sigma\leq \frac{1}{6}-\ep$ be defined by $|V|=(\frac{3}{2}-\ep+\sigma)n$ and let $d=\min\{\floor{\frac{1/2-3\ep}{\sigma}}+1, 100\}$.  
If $$|E|\leq \left(\frac{3(d+1)+6\sigma}{4}-d\ep\right)n,$$
then there exists a 2-coloring of $G$ such that every blue component has order at most $n-1$ and the graph $G_R$ induced by the red edges is $(\frac{\ep n}{2}, P_{n})$-free.
\end{lemma}

\begin{proof}
First note that $d$ is an integer with $d> \frac{1/2-3\ep}{\sigma}\geq \frac{1/2-3\ep}{1/6-\ep}=3$ and thus $d\geq 4$. Note that since $\sigma$ depends on $|V|$ and $d$ depends on $\sigma$, the upper bound on $|E|$ is a piecewise function of $|V|$ which increases on every interval corresponding to a fixed value of $d$, but which decreases as $d$ increases (roughly, the upper bound decreases as $|V|$ increases - see Figure \ref{3.75}).

The proof is based on the following claim.
\begin{claim}\label{XYZpartition}
There exists a partition $\{X,Y,Z\}$ of $V$ such that
\begin{enumerate}
\item every vertex in $X$ has at most one neighbor in $Z$ and
\item $|Z|\leq n-1$, $|Y|\leq (1/2-\ep)n$, and $|X|+|Y|\leq n-1$.
\end{enumerate}
\end{claim}

Before proving the claim, note that if such a partition exists, we can color all edges inside $Z$ and all edges inside $X\cup Y$ blue and all edges in $[X\cup Y, Z]$ red so that clearly every blue component has at most $n-1$ vertices (see Figure \ref{fig:XYZ}).  To see that the red graph $G_R$ is $(\ep n /2, P_{n})$-free, note that if $V_0$ is a set of at most $\ep n/2$ vertices adjacent to everything in $G_R$, then any red path can use at most $2|V_0|+2$ vertices from $X$, all vertices from $|V_0\cup Y|$ and $\min\{|Z|, |Y|+1\}$ vertices from $Z$ and $$2|V_0|+2+|V_0|+|Y|+|Y|+1=2|Y|+3|V_0|+3\leq n-1.$$

\begin{figure}[ht]
\centering
\subfloat[Subfigure 1 list of figures text][Finding the desired partition if $Y^*$ is small enough]{
\includegraphics[scale=1]{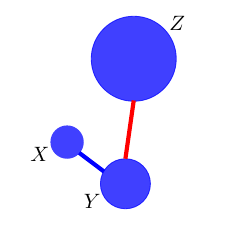}
\label{proof2i}}
\hspace{1in}
\subfloat[Subfigure 1 list of figures text][Finding the desired partition after moving vertices from  $Y^*$]{
\includegraphics[scale=1]{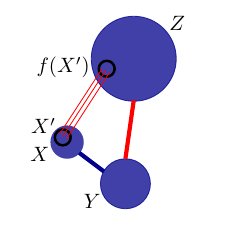}
\label{proof2ii}}
\caption{Coloring the edges in Lemma \ref{close3n/2}}\label{fig:XYZ}
\end{figure}

\begin{proof}[Proof of Claim \ref{XYZpartition}]
Let $X^*=\{v\in V: d(v)\leq d\}$.  We first show that $|X^*|$ is significantly larger than $\sigma n$.  Indeed, using $\delta(G)\geq 3$ we have 
\begin{align*}
\left(\frac{3(d+1)+6\sigma}{2}-2d\ep\right)n\geq 2|E|\ge \sum_{v\in V}d(v)&\geq 3|X^*|+(d+1)(|V|-|X^*|)\\
&=(d+1)|V|-(d-2)|X^*|.
\end{align*}

Rearranging and using $|V|=(3/2-\ep+\sigma) n$ gives 
\begin{align*}
(d-2)|X^*|&\geq (d+1)(3/2-\ep+\sigma) n-\left(\frac{3(d+1)+6\sigma}{2}-2d\ep\right)n= (d-2)\sigma n+(d-1)\ep n,
\end{align*}
and thus
\begin{equation}\label{X*}
|X^*|> (\sigma +\ep)n.
\end{equation}

Now let $X\subseteq X^*$ such that $|X|=\floor{(\sigma + \frac{\ep}{d-1})n}$, let $Y^*=N(X)\setminus X$, and note that
\begin{equation}\label{Y*}
|Y^*| \le d|X|.
\end{equation}

Since $N(X)\subseteq X\cup Y^*$ we would be done if $|Y^*| \le (1/2-2\ep)$ by letting $Y\subseteq V\setminus X$ such that $Y*\subseteq Y$ and $|Y|=\floor{(1/2-\ep)n}$ and letting $Z=V\setminus (X\cup Y)$ and noting that 
\begin{align*}
|Z|=|V|-|X|-|Y|&= (3/2-\ep+\sigma)n-|X|-|Y|\\
&\stackrel{\eqref{X*}}{\leq} (3/2-\ep+\sigma)n-(\sigma +\frac{\ep}{d-1})n-(1/2-\ep)n \leq n-1.  
\end{align*}
where the last inequality holds since $n$ is sufficiently large.

We now show that if $|Y^*| > (1/2-2\ep)$, then we can move at least $|Y^*|-(1/2-2\ep)$ vertices from $Y^*$ to $Z$.  We do this by showing that there exists an induced matching in the bipartite graph $G[X, Y^*]$ of size at least $|Y^*|-(1/2-2\ep)$.  

Let $Y_1=\{v\in Y^*: d(v, X)=1\}$ and $Y_2=\{v\in Y^*: d(v, X)\geq 2\}$.  We note that since every vertex in $X$ sends at most $d$ edges to $Y^*$, $G[X, Y^*]$ has an induced matching of size at least $|Y_1|/d$.  We have 
\begin{align*}
d|X|\geq e(X, Y^*)\geq |Y_1|+2(|Y^*|-|Y_1|)=2|Y^*|-|Y_1|
\end{align*}
which implies 
\begin{align*}\frac{|Y_1|}{d} \geq \frac{2|Y^*|}{d}-|X|= |Y^*| - \frac{d-2}{d}|Y^*| - |X|
&\stackrel{\eqref{Y*}}{\ge} |Y^*| - (d-1)|X|\\
&= |Y^*|-(d-1)\floor{(\sigma +\frac{\ep}{d-1})n}\\
&\geq |Y^*|-(d-1)\sigma n-\ep n\\
&\geq |Y^*|-\frac{1/2-3\ep}{\sigma}\sigma n-\ep n\\
&=|Y^*|-(\frac{1}{2}-2\ep)n
\end{align*} 
where the last inequality holds by the upper bound on $d$.

Let $M$ be a maximum induced matching in $G[X,Y^*]$ where $X'$ are the vertices from $X$ which are saturated by $M$ and $f(X')$ are the vertices in $Y^*$ which are saturated by $M$.  Set $Y'=Y^*\setminus f(X')$.  By the above we have $|Y'|\leq (1/2-2\ep)$.

Finally, let $Y\subseteq V\setminus X$ such that $Y'\subseteq Y$ and $|Y|=\floor{(1/2-\ep)n}$.  Now let $Z=V\setminus (X\cup Y)$ and note that  as before
\begin{align*}
|Z|=|V|-|X|-|Y|&= (3/2-\ep+\sigma)n-|X|-|Y|\\
&\stackrel{\eqref{X*}}{\leq} (3/2-\ep+\sigma)n-(\sigma +\frac{\ep}{d-1})n-(1/2-\ep)n \leq n-1.  
\end{align*}
where the last inequality holds since $n$ is sufficiently large.

This completes the proof of Claim \ref{XYZpartition}.
\end{proof}
Having established Claim \ref{XYZpartition}, we have completed the proof of Lemma \ref{close3n/2}.
\end{proof}

\section{Two colors}\label{sec:2-col}

We are now ready to give a proof of Theorem \ref{thm:main-2-col}.   
We note that the $\ep$ in the following proof can be taken to be as small as $\ep =n^{-\Theta(1)}$; however, for the sake of readability, we didn't try to optimize the value of $\ep$.

\begin{proof}[Proof of Theorem \ref{thm:main-2-col}]
Let $0<\ep<1/100$ and let $n_0$ be a sufficiently large integer (the value of which we don't explicitly compute, but we will point out which inequalities depend on $n$ being sufficiently large).
Let $G'=(V',E')$ be a connected graph with at most $(3+\gamma-\ep)n$ edges, where $0\leq \gamma< 3/2+\ep$ is to be chosen later (ultimately, we will choose $\gamma=3/4$ and in fact our proof can handle larger values of $\gamma$, but restricting $\gamma$ in this way makes it easier to apply Lemma \ref{partition} later in the proof).  By Lemma \ref{mindegree} it suffices to assume that $\delta(G')\geq 3$ and thus $|V'|\leq 2|E'|/3\leq (2+2\gamma/3-2\ep/3)n$.  We will exhibit a 2-coloring of $G'$ with no monochromatic $P_n$, but since we are using Lemma \ref{mindegree} this will prove that all graphs with at most $|E'|$ edges will have a 2-coloring with no monochromatic $P_{n+2}$ (in other words, we will ultimately be showing that $\hat{R}(P_{n+2})\leq (3.75-o(1))(n+2)=(3.75-o(1))n$).  So by Proposition \ref{3n/2} we may assume that $|V'|\ge \frac{3}{2}n-\frac{3}{2}$.

Let $V_0=\{v\in V(G'): d(v)>n^{1/32}\}$.  We have 
$n^{1/32}|V_0|\leq 2|E'|$ and thus since $n$ is sufficiently large, 
\begin{equation}\label{V0}
|V_0|\leq 2(3+\gamma-\ep)n^{31/32}\leq \frac{\ep^2 n}{2}.  
\end{equation}
We say that a component $C$ of $G'-V_0$ is \emph{small} if $|C|<n$ and \emph{large} otherwise.

Suppose there are exactly $t$ edges between $V_0$ and the large components of $G'-V_0$.  Now let $G=(V,E)$ be the graph obtained from $G'$ by deleting all of the vertices in $V_0$, deleting all of the vertices in small components of $G'-V_0$, and adding $t$ edges inside $V$ so that the minimum degree of $G$ is at least 3.  So $|E|\leq |E'|$.

We claim that we can 2-color the edges of $G$ in such a way that the graph $G_B$ induced by the blue edges has no components of order at least $n$ and the graph $G_R$ induced by the red edges is $(\frac{\ep^2 n}{2}, P_{n})$-free.  Once we establish this claim, we complete the proof of Theorem \ref{thm:main-2-col} by coloring all edges between $V_0$ and the large components of $G'-V_0$ red, the edges inside $V_0$ blue, the edges between $V_0$ and the small components blue, and the edges inside the small components red.

\begin{figure}[ht]
\centering
\subfloat[Subfigure 1 list of figures text][Coloring the edges of $G$ in Case 1]{
\includegraphics[scale=1]{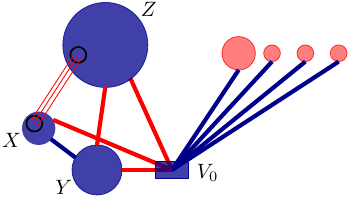}
\label{proof1}}
\hfill
\subfloat[Subfigure 1 list of figures text][Coloring the edges of $G$ in Case 2 and Case 3]{
\includegraphics[scale=1]{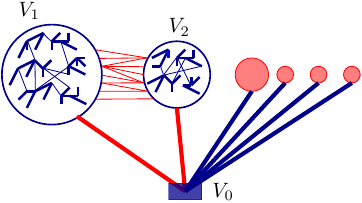}
\label{proof23}}
\caption{Coloring the edges in the proof of Theorem \ref{thm:main-2-col}}
\end{figure}

Note that by Proposition \ref{3n/2}, we may assume $|V|> (3/2-\ep^2)n$ since any graph with at most $(3/2-\ep^2)n$ vertices can be 2-colored in such a way that the red graph $G_R$ is $(\frac{\ep^2n}{2}, P_n)$-free and every component of the blue graph $G_B$ has order at most $n-1$.

\noindent
\textbf{Case 1} ($(\frac{3}{2}-\ep^2)n< |V|< (\frac{5}{3}-2\ep^2)n$) In this case we are done by applying Lemma \ref{close3n/2} (with $\ep^2$ in place of $\ep$).

\noindent
\textbf{Case 2} ($(\frac{5}{3}-2\ep^2)n\leq |V|\leq 2(n-1)$)  We parameterize this by introducing a variable $\sigma$ such that $|V|=(3/2-\ep^2+\sigma)n$ and thus $1/6-\ep^2\leq \sigma< 1/2+\ep^2$.

Lemma \ref{partition} (with $k=1$) provides a bipartition of $V$ into sets $V_1, V_2$ of order at most $n-1$ such that the number of edges crossing the partition is at most
\begin{equation*}
\left(1-\left(\frac{1}{1+(1/2+\sigma)}\right)^2-\left(\frac{1/2+\sigma}{1+(1/2+\sigma)}\right)^2+2\ep^2\right)(3/2+
\gamma-\sigma-\ep)n+|V|^{15/16}<(1-\frac{\ep}{4})n,
\end{equation*}
where the last inequality holds provided $n$ is sufficiently large and 
\begin{equation}\label{gammaupper}
\gamma\leq \frac{3/4+\sigma+3\sigma^2}{1+2\sigma}.
\end{equation}
Since we are assuming $1/6-\ep^2\leq  \sigma< 1/2+\ep^2$, we have $\frac{3/4+\sigma+3\sigma^2}{1+2\sigma}\geq 0.75-\ep$ with the minimum occurring when $\sigma= \frac{1}{6}-\ep^2$.

Now color the edges inside the sets $V_1, V_2$ blue and the edges between the sets $V_1$, $V_2$ red.  Since the red graph $G_R$ has at most $(1-\frac{\ep}{4})n$ edges, $G_R$ is $(\frac{\ep^2 n}{2}, P_{n})$-free.

\noindent
\textbf{Case 3} ($2(n-1)< |V|\leq 3(n-1)$) We parameterize this by introducing a variable $\tau$ and assuming that $|V| = (2+\tau)(n-1)$ where $0 < \tau \leq 1$.
Apply Lemma \ref{partition} (with $k=2$) to get a tripartition of $V$ into sets $V_1, V_2, V_3$ of order at most $n-1$ such that the number of edges crossing the partition is at most
\begin{align*}
\left(1-2\left(\frac{1}{2+\tau}\right)^2-\left(\frac{\tau}{2+\tau}\right)^2\right)(1+\gamma-\tau-\ep)n+|V|^{15/16}<(1-\frac{\ep}{4})n,
\end{align*}
where the last inequality holds provided $n$ is sufficiently large and 
\begin{equation*}
\gamma\leq \frac{1+\tau+5\tau^2/2}{1+2\tau}.
\end{equation*}
We have $\frac{1+\tau+5\tau^2/2}{1+2\tau}\geq \frac{3}{4}(\sqrt{5}-1)\approx 0.927$
with the minimum occurring when $\tau=\frac{3\sqrt{5}-5}{10}\approx 0.1708$.

Now color the edges inside the sets $V_1, V_2, V_3$ blue and the edges between the sets $V_1$, $V_2$, $V_3$ red.  Since the red graph $G_R$ has at most $(1-\frac{\ep}{4})n$ edges, $G_R$ is $(\frac{\ep^2 n}{2}, P_{n})$-free.
\end{proof}

\begin{figure}[ht!]
\begin{center}
\includegraphics[scale=6]{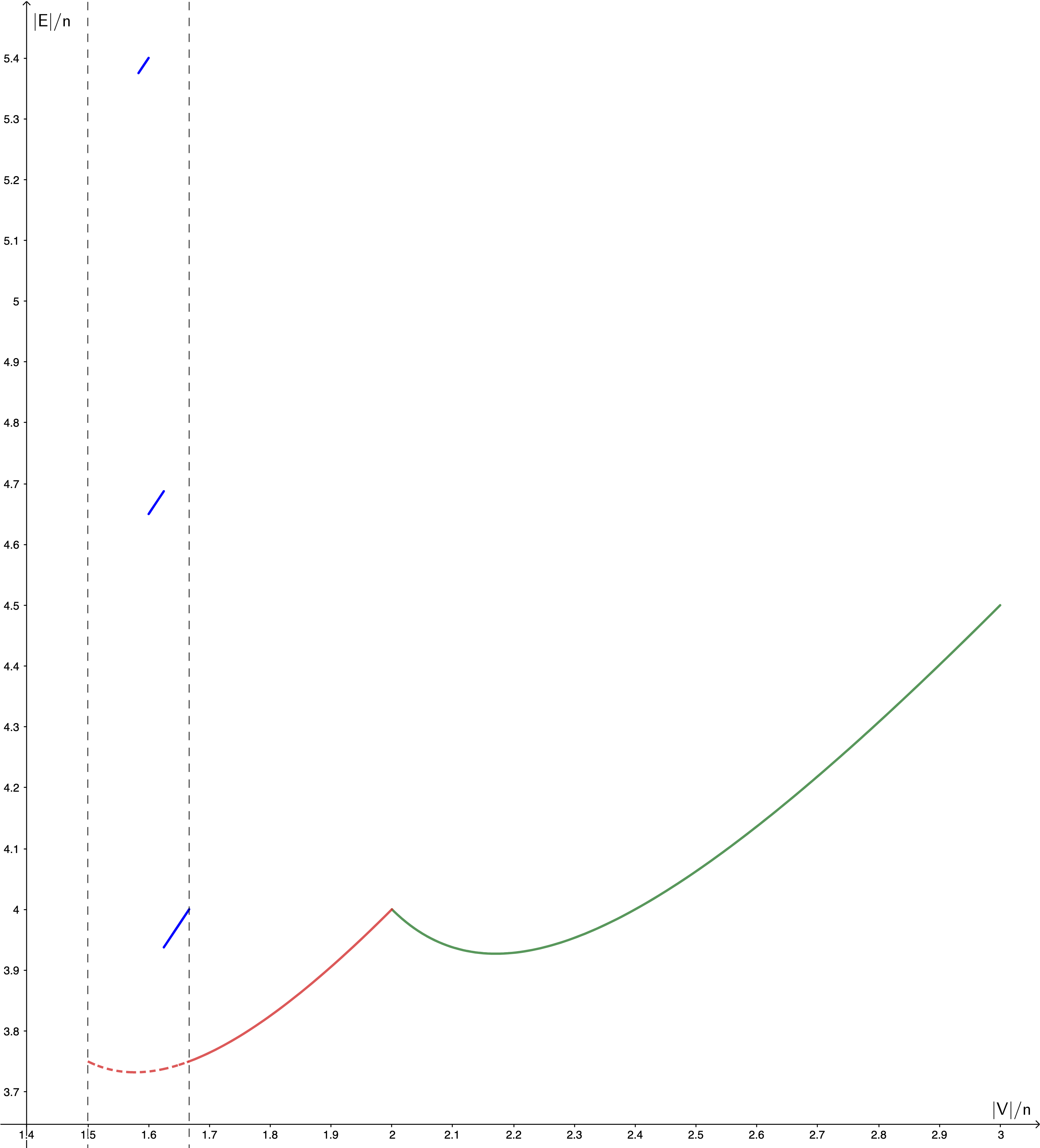}
\caption{The value of $c$ on the $x$-axis represents $|V|=cn$.  For a given value of $c$, the curve shows the maximum number of edges $G$ can have so that our proof gives a 2-coloring of $G$ with no monochromatic $P_n$.  The blue curve corresponds to Case 1, the solid red curve to Case 2, and the green curve to Case 3.  Note that the minimum over the entire interval is $3.75$ and occurs when $c=5/3$.}\label{3.75}
\end{center}
\end{figure}

One note about the previous proof.  If we were to deal with the case $(\frac{3}{2}-\ep^2)n<|V|\leq 2(n-1)$ by simply applying Lemma \ref{partition}, the bound we obtain in \eqref{gammaupper} would be $\gamma\leq \sqrt{3}-1$ which gives an overall lower bound of $\hat{R}(P_n)\geq (2+\sqrt{3}-o(1))n\approx 3.732n$ (this hypothetical scenario is depicted by the dashed red curve in Figure \ref{3.75}).  So by using Lemma \ref{close3n/2} to deal with the case $(\frac{3}{2}-\ep^2)n<|V|<(\frac{5}{3}-2\ep^2)n$ separately, we get an improvement of about $0.018n$.  In light of this, if one were to attempt to improve the lower bound of $(3.75-o(1))n$, a good test case would be when $|V|\approx\frac{5n}{3}$, since this corresponds to the case where $|V|=(\frac{3}{2}-\ep^2+\sigma)n$ and $\sigma\approx 1/6$ which is the bottleneck of the above proof. 


Finally we note that by our result, one immediately obtains $$\hat{R}(C_n, C_n)\ge \hat{R}(P_n, C_n)\ge \hat{R}(P_n, P_n) \ge (3.75 - o(1))n$$ for all sufficiently large $n$. However when $n$ is odd, any graph $G$ with $|V(G)|\le 2n-2$ can be 2-colored in a way which avoids a monochromatic $P_n$ and $C_n$ (partition the vertices into 2 sets of size at most $n-1$ and note that the graph between the two sets contains no odd cycles).  Thus, we can use the bounds provided by Case 3 of the proof of Theorem \ref{thm:main-2-col} to obtain the following.
\begin{remark}
For all $\ep>0$ and sufficiently large $n$, if $G=(V, E)$ is a graph with $|V|\geq 2n-1$ and $|E|\leq (3+\frac{3(\sqrt{5}-1)}{4}-\ep)n$, then there exists a 2-coloring of the edges of $G$ such that every monochromatic path has order less than $n$.  Consequently, for all sufficiently large odd $n$, $\hat{R}(C_n, C_n)\geq \hat{R}(P_n, C_n)\geq (3+\frac{3(\sqrt{5}-1)}{4}-o(1))n\geq 3.927n$.
\end{remark}

The above improves a bound of $\hat{R}(P_n, C_n)\geq 3(n-1)$ (for odd $n$) noted by Dudek, Khoeini, and Pra{\l}at in \cite{DKP} and can be used to slightly improve a bound of $\hat{R}_r(C_n)\geq (3.75-o(1))2^{r-2}n$ (for odd $n$) given by Javadi and Miralaei in \cite{JM}.

\section{More than two colors} \label{sec:r-col}

The following statement implies the first part of Theorem \ref{thm:main-r-col}.

\begin{proposition}\label{r=3}
For all $\ep> 0$, $r\geq 2$, and sufficiently large $n$, if $G$ is a graph with at most $(\frac{(r-1)r}{2}+\alff-\ep)n$ edges, then there exists an $r$-coloring of the edges of $G$ such that every monochromatic path has order less than $n$.
\end{proposition}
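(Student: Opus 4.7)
The plan is to induct on $r$, with the base case $r=2$ given by Theorem \ref{thm:main-2-col}, since $\frac{(2-1)\cdot 2}{2} + \alff = 1 + \alff = \alf$.

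For the inductive step, fix $r \geq 3$ and $\epsilon > 0$, and let $G$ be a graph with $|E(G)| \leq \of{\frac{(r-1)r}{2} + \alff - \epsilon}n$; we wish to exhibit an $r$-coloring of $E(G)$ with no monochromatic $P_n$. Since monochromatic paths stay within single components, and each component satisfies the same edge bound, we may assume $G$ is connected. If $|V(G)| \leq (r-1)(n-2)$, Proposition \ref{3n/2_r} supplies the desired coloring directly, so assume $|V(G)| > (r-1)(n-2)$. The key idea is to spend one color, say color $r$, on a spanning subgraph of $G$ with no $P_n$ and essentially $(r-1)n$ edges, then apply the inductive hypothesis to what remains with $r-1$ colors. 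Concretely, fix a spanning tree $T$ of $G$ and apply Lemma \ref{snip} to produce $E' \subseteq E(T)$ with $|E'| \leq \lfloor |V(G)|/\lfloor n/2 \rfloor \rfloor - 1$ such that $F := T - E'$ has no path of order $n$. Color every edge of $F$ with color $r$, and set $G' = (V(G), E(G) \setminus E(F))$.

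It remains to verify that $G'$ satisfies the inductive hypothesis with $r-1$ colors and some $\epsilon' > 0$. From $|V(G)| > (r-1)(n-2)$, together with the crude upper bound $|V(G)| \leq 2|E(G)| = O(r^2 n)$, we have $|E'| = O(|V|/n) = O(r^2)$, hence $|E(F)| = |V(G)| - 1 - |E'| \geq (r-1)n - O(r^2)$. Therefore
\[
|E(G')| \leq |E(G)| - |E(F)| \leq \of{\tfrac{(r-2)(r-1)}{2} + \alff - \epsilon}n + O(r^2),
\]
which for $n$ sufficiently large (depending on $r$ and $\epsilon$) is at most $\of{\tfrac{(r-2)(r-1)}{2} + \alff - \epsilon/2}n$. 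The inductive hypothesis applied with parameter $\epsilon/2$ then yields an $(r-1)$-coloring of $G'$ in colors $1, \dots, r-1$ with no monochromatic $P_n$; combined with color $r$ on $E(F)$ (which has no monochromatic $P_n$ because $F$ itself does not), this produces the desired $r$-coloring of $G$.

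There is no serious obstacle here; the whole step is just edge arithmetic. Lemma \ref{snip} destroys only $O(|V|/n)$ edges of the spanning tree, so color $r$ spends essentially $|V(G)| \geq (r-1)n$ edges, which is precisely the $(r-1)n$ gap between the $r$-color and $(r-1)$-color edge bounds. The lower bound $|V(G)| > (r-1)(n-2)$ coming from Proposition \ref{3n/2_r} is exactly what guarantees the spanning tree is large enough for this accounting to close.
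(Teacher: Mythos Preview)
Your proof is correct and follows essentially the same approach as the paper's: induct on $r$ with base case Theorem~\ref{thm:main-2-col}, dispose of small vertex sets via Proposition~\ref{3n/2_r}, and otherwise spend color $r$ on a pruned spanning tree obtained from Lemma~\ref{snip} before applying induction to the remaining edges. Your arithmetic is slightly more explicit (tracking $\epsilon$ rather than $o(1)$ and justifying the bound on $|E'|$ via $|V|\leq |E|+1$), but the structure is identical.
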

 
\begin{proof}
Let $G=(V,E)$ be a connected graph with at most $(\frac{(r-1)r}{2}+\alff-\ep)n$ edges.  For $r=2$, the result holds by Theorem \ref{thm:main-2-col}.  So let $r\geq 3$ and suppose the result is true in the case $r-1$.  If $N\leq (r-1)(n-2)$, then we are done by Proposition \ref{3n/2_r}; so suppose $N\geq (r-1)(n-2)+1$.  Let $T$ be a spanning tree of $G$ and apply Lemma \ref{snip} to get a forest $F$ with no paths of order $n$ and at least $(r-1)(n-2)-2r-2=(r-1)n-4r$ edges. Color the edges of the forest with color $r$.  The number of remaining edges is at most $(\frac{(r-1)r}{2}+\alff-\ep)n-(r-1)n+4r=(\frac{(r-2)(r-1)}{2}+\alff-\ep')n$ (where $\ep' = \ep-4r/n >0$ since $n$ is sufficiently large) and thus we may apply induction to color the remaining edges with the remaining $r-1$ colors.   
\end{proof}

\begin{definition}
An affine plane of order $q$ is a $q$-uniform hypergraph on $q^2$ vertices (called points), with $q(q+1)$ edges (called lines) such that each pair of vertices is contained in exactly one edge.  
\end{definition}

It is well known that an affine plane of order $q$ exists whenever $q$ is a prime power (and it is unknown whether there exists an affine plane of non-prime power order).  We collect two key properties of affine planes in the following proposition.

\begin{proposition}\label{affprop}
Let $q\geq 2$ be such that there exists an affine plane of order $q$.  There exists a $q+1$-coloring of the edges of $K_{q^2}$ such that
\begin{enumerate}
\item\label{ap1} every color class (called a parallel class) consists of a collection of $q$ vertex disjoint $K_q$'s, and
\item\label{ap2} every vertex $v$ is contained in exactly one $K_r$ of each color and the union of these $q+1$ $K_r$'s incident with $v$ is all of $V(K_{q^2})$.
\end{enumerate}
\end{proposition}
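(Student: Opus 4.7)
The plan is to derive both conclusions from three basic consequences of the definition of an affine plane: \emph{(a)} each point lies on exactly $q+1$ lines; \emph{(b)} the relation ``equal or disjoint'' on lines (``parallelism'') is an equivalence relation; \emph{(c)} each parallel class consists of exactly $q$ pairwise disjoint lines whose union is all of $V$. Given these, the coloring in the statement is obtained by assigning to each edge $uv\in E(K_{q^2})$ the color corresponding to the parallel class of the unique line through $u$ and $v$.

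First I would establish \emph{(a)} by counting: fix a point $p$; each of the remaining $q^2-1$ points lies on a unique line with $p$, and each line through $p$ contains $q-1$ other points, so there are $(q^2-1)/(q-1)=q+1$ lines through $p$. Next I would prove the parallel postulate: for a line $\ell$ and a point $p\notin\ell$, there is a unique line through $p$ disjoint from $\ell$. Each of the $q+1$ lines through $p$ meets $\ell$ in at most one point (two common points would force two distinct lines through those points, contradicting uniqueness). The $q$ points of $\ell$ each lie on exactly one of the lines through $p$, exhausting $q$ of them; the remaining line is the unique one disjoint from $\ell$. This immediately gives \emph{(b)}: reflexivity and symmetry are formal, and transitivity follows since two lines through a common point, both parallel to a third line, would violate the uniqueness just proved. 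Property \emph{(c)} then follows because each point lies on a unique member of any given parallel class, so the class partitions $V$ into $q^2/q=q$ disjoint lines; since there are $q(q+1)$ lines in total, there are $q+1$ parallel classes.

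With \emph{(a)}--\emph{(c)} in hand, verifying the statement is straightforward. For \emph{(\ref{ap1})}: color $uv\in E(K_{q^2})$ by the parallel class containing the unique line through $u,v$; this is a well-defined $(q+1)$-coloring, and the edges of a given color are exactly those inside the $q$ disjoint $K_q$'s induced by the lines of the corresponding parallel class. For \emph{(\ref{ap2})}: fix a vertex $v$. Each parallel class contains a unique line through $v$ (two such lines would both contain $v$ hence be equal by \emph{(c)}), giving $q+1$ lines through $v$, one per color. These $q+1$ lines pairwise intersect only in $v$ (otherwise two distinct lines would share two points), so their union has exactly $1+(q+1)(q-1)=q^2=|V(K_{q^2})|$ vertices.

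The main obstacle is purely expository rather than mathematical: the proposition is stated as a consequence of ``well-known'' incidence-geometric facts, so the bulk of the work is in cleanly extracting the parallel-class decomposition from the minimalist hypergraph definition given in the excerpt. Once that is done, the coloring and the two properties are immediate from the uniqueness of the line through two points.
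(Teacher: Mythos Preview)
Your proposal is correct and follows the standard incidence-geometric derivation of the parallel-class structure from the axioms of an affine plane. The paper itself does not supply a proof of this proposition: it is stated as a collection of ``well-known'' properties of affine planes, so there is nothing to compare against beyond noting that your argument is exactly the standard one.
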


The following theorem implies the second part of Theorem \ref{thm:main-r-col}.  We modify Krivelevich's proof \cite[Theorem 8]{K} in such a way that no color is ``wasted'' on the high degree vertices.  This improves the lower bound from $((r-2)^2-o(1))n$ to $((r-1)^2-o(1))n$.

\begin{proposition}\label{affine}
Suppose that an affine plane of order $q$ exists and suppose $n$ is sufficiently large. For all graphs $G$ with at most $q^2 n - 6q^4n^{0.9}  = (q^2-o(1)) n$ edges, there exists a $q+1$-coloring of the edges of $G$ such that every monochromatic path has order less than $n$.
\end{proposition}

\begin{proof}
Let $G=(V,E)$ be a graph with $|E|\leq q^2n-6q^4n^{.9}$.  Let $V_0:=\set{v\in V(G)}{d(v) \ge n^{0.1}}$. Then $q^2 n \ge |E(G)| \ge \frac{1}{2}|V_0|n^{0.1}$ implies that 
$|V_0| \le 2q^2n^{0.9}$.
Now randomly partition $V\setminus V_0$ into $q^2$ parts $V_1, \ldots V_{q^2}$ by placing each vertex into one of these sets independently with probability $1/q^2$.
 Let $L$ be a line of the affine plane $A_{q}$ on point set $[q^2]$. For each edge $e$ in $G[V\setminus V_0]$, we assign color $i$ to $e$ if the endpoints of $e$ are in distinct sets $V_x$ and $V_y$ where the unique line containing $x$ and $y$ in $A_{q}$ is in the $i$'th parallel class of $A_{q}$. We color $e$ arbitrarily if both of its endpoints are in $V_x$ for some $x$.
 
For a line $L$ of $A_{q}$, define the random variable $X_L := |E\of{\bigcup_{x\in L}V_x}|$. 
Then \[\mathbb{E}\sqbs{X_L}\le \frac{1}{q^2}\cdot |E(G)| \leq n - 6q^2n^{0.9}.\]
 Since every vertex of $V\setminus V_0$ has degree at most $n^{0.1}$, we have that moving any one vertex from $V_x$ to $V_y$ can change $X_L$ by at most $n^{0.1}.$ Thus we may apply McDiarmid's inequality (Lemma \ref{lem:mcd}) with $c_k = n^{0.1}$ for all $k$ to conclude that
 \[\mathbb{P}\sqbs{X_L \ge n-5q^2n^{0.9}} \le \exp\of{-\frac{(q^2n^{0.9})^2}{2|V\setminus V_0|\cdot (n^{0.1})^2}} = \exp\of{-\Omega(n^{0.6})},\]
where we used $|V\setminus V_0|\leq |E|\leq q^2n$ in the last inequality. Thus taking a union bound over all $(q+1)q$ lines of $A_q$, we conclude that there exists a partition of $V\setminus V_0$ in which at most $n - 5q^2n^{0.9}$ edges lie inside $\bigcup_{x\in L}V_x$ for all lines $L$.  In other words, for all $L$ in $A_q$, the graph induced by $\bigcup_{x\in L}V_x$ is $(2q^2n^{0.9}, P_n)$-free. Suppose $V_1, \ldots, V_{q^2}$ is such a partition. 
 
Finally, we must color the edges incident with $V_0$.  We color the edges from $V_0$ to $V_0\cup V_1$ arbitrarily, and for all $i\in [q^2]\setminus\{1\}$ we color the edges from $V_0$ to $V_i$ the same as the color of the edges between $V_1$ and $V_i$.  By Proposition \ref{affprop}(ii), this accounts for all of the edges incident with $V_0$, and since for all $L$ containing $V_1$ the graph induced by $\bigcup_{x\in L}V_x$ is $(2q^2n^{0.9}, P_n)$-free, we have that the graph induced by $V_0\cup \bigcup_{x\in L}V_x$ is $P_n$-free.  
%
\end{proof}

\section{Additional observations and conclusion}\label{sec:concl}

In this section we collect a few additional thoughts, none of which fit into into the main thread of the paper.  The four observations below quantify the intuitive notion that if $G$ is a graph having the property that every 2-coloring of the edges of $G$ contains a monochromatic $P_n$, then $G$ must be ``expansive'' in some sense. 

For a graph $G=(V,E)$, let $S_V$ be the set of permutations of $V$. The \emph{bandwidth}, $\varphi$ of $G$ is defined as \[\varphi(G):=\min_{f\in S_V}\max_{uv\in E}|f(u) - f(v)|.\]

\begin{observation}
For all graphs $G$, if $\varphi(G) \le \frac{n}{2}-1$, then there is a 2-coloring of the edges of $G$ such that every monochromatic path has order less than $n$.
\end{observation}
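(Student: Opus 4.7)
The plan is to exploit a bandwidth ordering to cut $V(G)$ into consecutive blocks so small that no edge can span two non-adjacent blocks, and then to pair up consecutive blocks into groups of at most $n-2$ vertices. Concretely, let $f: V(G) \to \{1, 2, \ldots, |V(G)|\}$ be an ordering witnessing $\varphi(G) \le \lfloor n/2 \rfloor - 1 =: d$. For $i \ge 1$, define $B_i := f^{-1}(\{(i-1)d + 1, \ldots, id\})$, yielding a partition of $V(G)$ into consecutive blocks $B_1, \ldots, B_t$ each of size at most $d$. Since $|f(u) - f(v)| \le d$ for every edge $uv$, each edge of $G$ is either contained in some $B_i$ or joins two \emph{consecutive} blocks $B_i, B_{i+1}$.

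Next I would pair the blocks: set $P_j := B_{2j-1} \cup B_{2j}$ for $j \ge 1$, allowing a leftover $P_{\lceil t/2 \rceil} := B_t$ when $t$ is odd. Each $P_j$ has at most $2d \le n - 2$ vertices. Color an edge \emph{red} if both endpoints lie in the same $P_j$, and \emph{blue} otherwise. Every red path is confined to a single $P_j$ and so has at most $n - 2$ vertices. For the blue edges, the key observation is that any edge joining different $P_j$'s must connect $B_{2j}$ (the last block of $P_j$) to $B_{2j+1}$ (the first block of $P_{j+1}$), because $G$ has no edges skipping a block. Thus the blue graph is a disjoint union of bipartite subgraphs, one on each pair $B_{2j} \cup B_{2j+1}$, each of order at most $2d \le n - 2$, and it contains no blue path of order $n$.

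The only subtlety here is calibrating the block size $d$: it must be large enough for the bandwidth hypothesis to force edges to stay between consecutive blocks, yet small enough that any two consecutive blocks together span at most $n - 2$ vertices. The choice $d = \lfloor n/2 \rfloor - 1$ achieves both, and beyond this the argument is essentially the same two-set coloring idea used in Proposition \ref{3n/2}.
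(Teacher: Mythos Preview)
Your proof is correct and follows essentially the same approach as the paper: take a bandwidth ordering, partition into consecutive blocks of size $\lfloor n/2\rfloor - 1$, and alternate colors so that each monochromatic component sits inside a union of two consecutive blocks. The only cosmetic difference is that the paper colors edges inside even-indexed blocks with the opposite color from edges inside odd-indexed blocks, whereas you color all edges inside a pair $P_j$ red; both choices confine every monochromatic path to a set of at most $2d\le n-2$ vertices.
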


\begin{proof}
Choose an ordering $f$ of $V(G)$ which witnesses the bandwidth of $G$; i.e.\ $\max_{uv\in E}|f(u) - f(v)|=\phi(G)$. Now split the vertices into sets $V_1, \dots, V_t$, with $|V_1|=\dots=|V_{t-1}|=\floor{\frac{n}{2}-1}$ and $|V_t|\leq n-1$.  For all odd $i\in [t]$, color the edges from $V_i$ to $V_i\cup V_{i+1}$ red, and for all even $j\in [t]$ color the edges from $V_j$ to $V_j\cup V_{j+1}$ blue.
\end{proof}

A \emph{depth first search} (DFS) tree (or \emph{normal} tree) $T$ rooted at $x$ in a graph $G$ is a subtree of $G$ such that for all $uv\in E(G)$ with  $u,v \in V(T)$, either $v$ is on the $x-v$ path in $T$ or $u$ is on the $x-u$ path in $T$.  

For a connected subgraph $H$ of a graph $G$ and vertices $u,v\in V(H)$, let $d_H(u,v)$ be the length of the shortest path between $u$ and $v$ in $H$.  A \emph{breadth first search} (BFS) tree $T$ rooted at $x$ is a subtree of $G$ such that for all $v\in V(T)$,  $d_T(x, v)=d_G(x, v)$.  Such a tree has the property that for all $uv\in E(G)$ with $u,v\in V(T)$, $|d_T(x,u)-d_T(x,v)|\leq 1$.  The vertices at each fixed distance from the root are called the \emph{levels} of $T$.
It is well known that for every connected graph $G$ and every vertex $x\in V(G)$, there exists a spanning DFS tree $T$ rooted at $x$ and a spanning BFS tree rooted at $x$.  

Using the notation for rooted trees from the proof of Lemma \ref{treepartition}, we have the following observation.
\begin{observation}
Let $G$ be a connected graph.  If there exists a vertex $x$ and a DFS tree $T$ rooted at $x$ so that every child $y\in C(x)$ satisfies $|S(T,y)|\le \frac{5n}{4}-2$, then there exists a 2-coloring of the edges of $G$ such that every monochromatic path has order less than $n$.
\end{observation}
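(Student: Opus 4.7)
The plan is to exploit the defining property of a DFS (normal) tree $T$ rooted at $x$: every edge $uv\in E(G)$ joins an ancestor to a descendant in $T$. In particular, if $y_1,\dots,y_k$ are the children of $x$, then for $i\neq j$ there is no $G$-edge between $S(T,y_i)$ and $S(T,y_j)$. Writing $B_i:=G[S(T,y_i)]$, the subgraphs $B_1,\dots,B_k$ are pairwise non-adjacent in $G-x$, and no simple path of $G$ can pass through $x$ more than once.

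For each $i$, I would then apply a Gerencs\'er--Gy\'arf\'as style partition (as in the proof of Proposition \ref{3n/2}) to $B_i$, but with a thinner red side. Choose $X_1^{(i)}\subseteq V(B_i)$ with $|X_1^{(i)}|=\max\{0,\,|V(B_i)|-(n-1)\}$ and let $X_2^{(i)}:=V(B_i)\setminus X_1^{(i)}$; the hypothesis $|V(B_i)|=|S(T,y_i)|\le \frac{5n}{4}-2$ gives $|X_1^{(i)}|\le \frac{n}{4}-1$ and $|X_2^{(i)}|\le n-1$. Color every edge of $B_i$ incident with $X_1^{(i)}$ red, every edge of $B_i$ inside $X_2^{(i)}$ blue, and every edge of $G$ incident with $x$ red. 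Blue paths then lie inside a single $X_2^{(i)}$ (each blue edge does, and the $B_i$'s are non-adjacent in $G-x$), so have order at most $n-1$. For a red path $P$, the usual Gerencs\'er--Gy\'arf\'as observation says that every two consecutive vertices of $P$ share a vertex in the red-side set; since $P$ uses $x$ at most once it can meet at most two of the $B_i$'s, and the red-side set becomes $X_1^{(i)}\cup X_1^{(j)}\cup\{x\}$, yielding $|V(P)|\le 2(|X_1^{(i)}|+|X_1^{(j)}|+1)+1\le n-1$.

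There is no substantial obstacle; the only point that takes a little thought is why the constant $\frac{5n}{4}-2$ is exactly right. It forces $|X_1^{(i)}|\le \frac{n}{4}-1$, which is precisely what is needed so that two such red-side pieces together with $\{x\}$ still fit below the Gerencs\'er--Gy\'arf\'as red-side budget of $\frac{n}{2}-1$. The only structural ingredient beyond this bookkeeping is the ancestor/descendant property of $T$, which guarantees that $x$ is a cut vertex separating the $B_i$'s in $G$, so that a red path meeting two subtrees must actually go through $x$.
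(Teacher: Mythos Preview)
Your proof is correct and follows essentially the same approach as the paper's. Both partition each subtree $S(T,y)$ into a small ``red side'' of size at most $\frac{n}{4}-1$ and a ``blue side'' of size at most $n-1$, color all edges touching the red side (together with $x$) red and the rest blue, and then use the DFS property to argue that any red path meets at most two subtrees via $x$, giving the bound $2\bigl(\frac{n}{4}-1\bigr)+1+2\bigl(\frac{n}{4}-1\bigr)+1+1=n-1$. The paper additionally stipulates $y\in A_y$ but never uses it, and phrases the red bound as ``at most $\frac{n}{2}-1$ in each subtree, hence at most $n-1$ overall'', which is the same computation you carry out.
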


\begin{proof}
For each sub-tree $S(T,y)$ where $y\in C(x)$, we partition the vertices of $S(T,y)$ into sets $A_y$ and $B_y$ where $|A_y|\le \frac n4 -1$, $y\in A_y$ and $|B_y|\le n-1$. Let $A=\{x\}\cup \bigcup_{y\in C(x)}A_y$ and $B=\bigcup_{y\in C(x)}B_y$.  We color the edges of $G$ within $B$ blue and the edges from $A$ to $A\cup B$ red.  Note that this is all the edges of $G$ since no edges go between $S(T,y)$ and $S(T,z)$ for $y,z\in C(x)$, $y\neq z$. Clearly there are no blue paths of order $n$. Any red path may intersect at most two of the sub-trees $S(T,y)$, $S(T,z)$ for $y,z\in C(x)$, $y\neq z$ and any such path must pass through $x$. For all $y\in C(x)$, the longest possible red path in $G[A_y\cup B_y]$ is of order at most $\frac n2 -1$ and so the longest red path in $G$ is of order at most $n-1$.
\end{proof}

\begin{observation}
Let $G$ be a connected graph.  If there exists a vertex $x$ and a BFS tree $T$ rooted at $x$ such that every pair of consecutive levels of $T$  have fewer than $n$ vertices, then there exists a 2-coloring of the edges of $G$ such that every monochromatic path has order less than $n$.
\end{observation}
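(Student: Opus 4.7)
The plan is to exploit the defining property of a BFS tree: every edge of $G$ has its endpoints on levels that differ by at most one. Write $L_0 = \{x\}, L_1, L_2, \ldots$ for the levels of $T$. I would assign to each edge $uv \in E(G)$ a \emph{lower level} equal to $\min\{d_T(x,u), d_T(x,v)\}$, and then color $uv$ red if its lower level is even and blue if its lower level is odd. Concretely, this means that horizontal edges within $L_{2i}$ and crossing edges between $L_{2i}$ and $L_{2i+1}$ are red, while horizontal edges within $L_{2i+1}$ and crossing edges between $L_{2i+1}$ and $L_{2i+2}$ are blue. Since every edge has its lower level well-defined and of one parity, this covers every edge exactly once.

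The key claim I would then verify is that every connected component of the red subgraph is contained in $L_{2i} \cup L_{2i+1}$ for some $i \ge 0$, and every connected component of the blue subgraph is contained in $L_{2i+1} \cup L_{2i+2}$ for some $i \ge 0$. For red: any red edge touching a vertex $v \in L_{2i}$ either stays in $L_{2i}$ or crosses into $L_{2i+1}$ (edges from $L_{2i}$ to $L_{2i-1}$ have odd lower level $2i-1$, hence are blue); any red edge touching $v \in L_{2i+1}$ must cross back to $L_{2i}$, because in-level edges of $L_{2i+1}$ and edges from $L_{2i+1}$ to $L_{2i+2}$ have odd lower level $2i+1$ and are therefore blue. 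The analogous argument handles blue.

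Since the hypothesis guarantees that every pair of consecutive levels satisfies $|L_j \cup L_{j+1}| < n$, it follows that every monochromatic component has fewer than $n$ vertices, so every monochromatic path has order less than $n$. I do not anticipate a substantive obstacle; the proof reduces to correctly tracking the parity of the lower level and observing that the BFS property prevents edges from leapfrogging between non-adjacent slabs. The only step worth a careful sentence is ruling out red edges inside $L_{2i+1}$ (and the symmetric statement for blue), which is exactly what pins down the parity bookkeeping and ensures the decomposition into pairs of consecutive levels is clean.
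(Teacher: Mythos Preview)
Your proposal is correct and is essentially identical to the paper's proof: the paper also colors edges from $D_{2j}$ to $D_{2j}\cup D_{2j+1}$ red and edges from $D_{2j+1}$ to $D_{2j+1}\cup D_{2j+2}$ blue, which is exactly your ``lower-level parity'' rule. Your write-up is slightly more explicit about why each monochromatic component is confined to a single pair of consecutive levels, but the idea and the coloring are the same.
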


\begin{proof}
For all $i\geq 0$, let $D_i=\{v: d_T(x, v)=i\}$.  For all $j\geq 0$, color the edges from $D_{2j}$ to $D_{2j}\cup  D_{2j+1}$ red and the edges from $D_{2j+1}$ to  $D_{2j+1}\cup D_{2j+2}$ blue.  By the property of BFS trees, this accounts for every edge in $G$.  Since every two consecutive levels contain fewer than $n$ vertices, there are no monochromatic paths of order $n$.  
\end{proof}

The following observation was inspired by Figure 2 in both \cite{BKLL1} and \cite{BKLL2}.

\begin{observation}
If $G$ is a graph on $N$ vertices with $\alpha(G)\geq N-(n-3)$, then there exists a 2-coloring of the edges of $G$ such that every monochromatic path has order less than $n$.
\end{observation}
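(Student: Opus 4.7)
The plan is to exploit that, by hypothesis, $S := V(G)\setminus I$ --- where $I$ is an independent set witnessing $\alpha(G)\ge N-(n-2)$ --- is a vertex cover of $G$ of size at most $n-2$. This $S$ is roughly twice as large as would be needed to plug directly into Observation \ref{n/2-path}, so I would split $S$ in half and run, on $S$ alone, the ``one-color-per-half'' strategy of Proposition \ref{3n/2}.

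Concretely, partition $S = S_1\sqcup S_2$ as evenly as possible, so that $|S_1|, |S_2|\le\lceil(n-2)/2\rceil$. Color every edge of $G$ incident to $S_1$ red and color every remaining edge blue. By construction $S_1$ is a vertex cover of the red subgraph; meanwhile, every blue edge lies inside $G[S_2\cup I]$, and since $I$ is independent in $G$ (hence still independent in the blue subgraph), $S_2$ is a vertex cover of the blue subgraph.

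For each color $c\in\{R,B\}$, the corresponding monochromatic subgraph has vertex cover $S_c$, so every two consecutive vertices of a $c$-colored path must contain some $S_c$-vertex, forcing the path to have order at most $2|S_c|+1$. With $|S_c|\le\lceil(n-2)/2\rceil$, this cap is at most $n-1$, exactly as in the analogous computation in the proof of Proposition \ref{3n/2}.

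The only substantive step is the balanced bipartition of $S$; the rest amounts to a single invocation of the ``vertex-cover-controls-path-length'' principle, applied once per color class. There is no real obstacle here: once one notices that $S$ is a vertex cover of size at most $n-2$, the proof is forced.
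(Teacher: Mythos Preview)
Your proof is correct and is essentially identical to the paper's own argument: you both take the complement of the large independent set (a vertex cover of size at most $n-2$), split it into two nearly equal halves, and make one half the red vertex cover and the other the blue vertex cover. The only difference is notational---the paper calls the independent set $S$ and the two halves of its complement $X,Y$, while you call the independent set $I$ and the halves $S_1,S_2$---and both proofs share the same harmless parity wrinkle when $n$ is odd.
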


\begin{proof}
Let $S$ be an independent set of order at least $N-(n-3)$ and partition the vertices of $V(G)\setminus S$ into disjoint sets $X,Y$ with $|X|, |Y|\leq \frac{n}{2}-1$.  Color all edges incident with $X$ red and color all edges incident with $Y$ blue (so edges between $X$ and $Y$ can be either color).  The longest monochromatic path has order at most $2(\frac{n}{2}-1)+1=n-1$. 
\end{proof}

Finally, we end with the following question which relates to the upper bound on the size-Ramsey number of a path.

\begin{question}
What is the largest monochromatic path one can find in an arbitrary 2-coloring of a $d$-regular graph on $N$ vertices?
\end{question}

For instance, suppose it is always possible to 2-color the edges of 5-regular graph on $N$ vertices (with $N$ sufficiently large) so that there is no monochromatic path of order $\frac{N}{30}$.  This would imply that all 5-regular graphs on at most $30n$ vertices (which have at most $75n$ edges) have a 2-coloring with no monochromatic $P_n$; in other words, 5-regular graphs could never improve the current best \cite{DP2} upper bound $\hat{R}(P_n)\leq 74n$.

\bigskip

\noindent
\tbf{Acknowledgements:} We thank two very thorough referees for their careful reading of the paper and their helpful comments.


\begin{thebibliography}{99}

\bibitem{A} N.\ Alon. On the edge-expansion of graphs. \emph{Combinatorics, Probability and Computing} \tbf{6}, no. 2 (1997): 145--152.

\bibitem{AHK} N.\ Alon, P.\ Hamburger, and A.\ V.\ Kostochka. Regular honest graphs, isoperimetric numbers, and bisection of weighted graphs. \emph{European Journal of Combinatorics} \tbf{20}, no. 6 (1999): 469--481.

\bibitem{BDL} J.\ Balogh, A.\ Dudek, L.\ Li. An analogue of the Erd\H{o}s-Gallai theorem for random graphs. \emph{arXiv preprint} arXiv:1909.00214


\bibitem{BKLL1} J.\ Balogh, A.\ Kostochka, M.\ Lavrov, and X.\ Liu. Monochromatic connected matchings in 2-edge-colored multipartite graphs. \emph{arXiv preprint} arXiv:1905.04653 (2019).

\bibitem{BKLL2} J.\ Balogh, A.\ Kostochka, M.\ Lavrov, and X.\ Liu. Long monochromatic paths and cycles in 2-edge-colored multipartite graphs. Mosc. J. Comb. Number Theory \tbf{9}, no.\ 1 (2020): 55--100.

\bibitem{B1} J.\ Beck. On size Ramsey number of paths, trees, and circuits. I. \emph{Journal of Graph Theory} \tbf{7}, no. 1 (1983): 115--129.

\bibitem{B2} J.\ Beck. On size Ramsey number of paths, trees and circuits. II. \emph{In Mathematics of Ramsey theory}, pp. 34--45. Springer, Berlin, Heidelberg, 1990.

\bibitem{BFHP} J.\ Bermond, J.\ Fouquet, M.\ Habib, and B.\ Peroche. On linear $k$-arboricity. \emph{Discrete Mathematics} \tbf{52}, no. 2-3 (1984): 123--132.

\bibitem{Bie} H.\ Bielak. Remarks on the size Ramsey number of graphs. \emph{Periodica Mathematica Hungarica} \tbf{18}, no. 1 (1987): 27--38.

\bibitem{B} B.\ Bollob\'as. Extremal graph theory with emphasis on probabilistic methods. No. 62. American Mathematical Soc., 1986.

\bibitem{Bol} B.\ Bollob\'as. Random graphs. No. 73. Cambridge university press, 2001.

\bibitem{Die}  R. Diestel. Graph theory. Fifth edition. Graduate Texts in Mathematics, 173. Springer, Berlin, 2018. xviii+428 pp.


\bibitem{DKP} A. Dudek, F. Khoeini, P. Pra{\l}at. Size-Ramsey numbers of cycles versus a path.  \emph{Discrete Mathematics} \tbf{341}, no. 7 (2018): 2095--2103.


\bibitem{DP1} A.\ Dudek, P.\ Pra{\l}at.  An alternative proof of the linearity of the size-Ramsey number of paths. \emph{Combinatorics, Probability and Computing} \tbf{24}, no. 3 (2015): 551--555.

\bibitem{DP2} A.\ Dudek, P.\ Pra{\l}at. On some multicolor Ramsey properties of random graphs. \emph{SIAM Journal on Discrete Mathematics} \tbf{31}, no. 3 (2017): 2079--2092.

\bibitem{DP3} A.\ Dudek, P.\ Pra{\l}at.  Note on the Multicolour Size-Ramsey Number for Paths. \emph{Electronic Journal of Combinatorics} \tbf{25}, no.\ 3, (2018): P3.35.


\bibitem{Er} P.\ Erd\H{o}s. On the combinatorial problems which I would most like to see solved. \emph{Combinatorica} \tbf{1}, no. 1 (1981): 25--42.

\bibitem{FK} A.M. Frieze, M. Karo\'{n}ski. Introduction to random graphs. Cambridge University Press, Cambridge, 2016. xvii+464 pp.

\bibitem{GG} L.\ Gerencs\'er  and A.\ Gy\'arf\'as. On Ramsey-type problems. \emph{Ann. Univ. Sci. Budapest. E\"otv\"os Sect. Math} \tbf{10} (1967): 167--170.

\bibitem{G} A.\ Gy\'arf\'as. Large monochromatic components in edge colorings of graphs: a survey. \emph{In Ramsey Theory}, pp. 77--96. Birkhäuser, Boston, MA, 2011.

\bibitem{JM} R.\ Javadi, M.\ Miralaei. Multicolor Size-Ramsey Number of Cycles. \emph{arXiv preprint} arXiv:2106.16023 (2021).


\bibitem{K} M.\ Krivelevich. Long cycles in locally expanding graphs, with applications. \emph{Combinatorica} \tbf{39}, no. 1 (2019): 135--151.

\bibitem{KM} A.\ V.\ Kostochka, L.\ S.\ Melnikov. On bounds of the bisection width of cubic graphs. \emph{In Fourth Czechoslovakian Symposium on Combinatorics, Graphs and Complexity}, pp. 151--154. Elsevier, 1992.

\bibitem{Let} S.\ Letzter. Path Ramsey number for random graphs. \emph{Combinatorics, Probability and Computing} \tbf{25}, no. 4 (2016): 612--622.

\bibitem{M} C.\ McDiarmid. On the method of bounded differences. \emph{Surveys in combinatorics} \tbf{141}, no. 1 (1989): 148--188.

\bibitem{V} A.\ Vince. The integrity of a cubic graph. \emph{Discrete Applied Mathematics} \tbf{140}, no. 1-3 (2004): 223--239.

\bibitem{YYFB} S. Yongqi, Y. Yuansheng, X. Feng, and L. Bingxi. New lower bounds on the multicolor Ramsey
numbers $R_r(C_{2m})$. \emph{Graphs and Combinatorics} \tbf{22}, no.\ 2 (2006): 283--288.
\end{thebibliography}
\end{document}